\newtheorem{theorem}{Theorem}[section]
\newtheorem{proposition}[theorem]{Proposition}
\newtheorem{corollary}[theorem]{Corollary}
\newtheorem{definition}[theorem]{Definition}
\newtheorem{remark}[theorem]{Remark}
\newtheorem{problem}[theorem]{Problem}
\newcommand\supp{\mathop{\rm supp}}
\newcommand\id{\mathop{\rm id}}
\newcommand{\cl}[1]{\mathcal{#1}}
\begin{document}

\title{On the spectrum of multiplication operators}

\author{V. S. Shulman}
\address{Department of Mathematics, Vologda State University, Vologda, Russia}
\email{shulman.victor80@gmail.com}

\author{L. Turowska}
\address{Department of Mathematical Sciences,
Chalmers University of Technology and  the University of Gothenburg,
Gothenburg SE-412 96, Sweden}
\email{turowska@chalmers.se}

\maketitle

\dedicatory{{\it To Yuri Stefanovich Samoilenko with love and gratitude for theorems and songs}}
\begin{abstract}
We study relations between spectra of two operators that are connected to each other through some intertwining conditions. As application we obtain new results on the spectra of multiplication operators on $B(\cl H)$ relating it to the spectra of the restriction of the operators to the ideal $\mathcal C_2$ of Hilbert-Schmidt operators. We also solve one of the problems, posed in \cite{magajna}, about the positivity of the spectrum of multiplication operators with positive operator coefficients when the coefficients on one side commute. Using the Wiener-Pitt phenomena we show that the spectrum of  a multiplication operator with normal coefficients satisfying the Haagerup condition might be strictly larger than the spectrum of its restriction to $\mathcal C_2$.
\end{abstract}
\section{Introduction}

In operator theory often one has to deal with the "same" operators acting on different spaces, as for instance, the restrictions of an operator to (not necessarily closed) invariant subspaces supplied with different norms. The study of the structure, in particular spectral properties, of such operators may be difficult in one space and rather simple in another one. So it is natural to identify links between the operators that allow to connect their structural properties and in this way reduce difficult problems to simple ones. The present paper is devoted to establishing such links. The paper was influenced  by the work of B. Magajna \cite{magajna}, where the author studied the spectrum of L\"uders operators, playing a significant role in quantum information theory. L\"uders operators or L\"uders operations are symmetric multiplication operators with positive operator coefficients on $B(\cl H)$ (the precise definition is given below); in \cite{magajna}  B. Magajna  answered in negative the question, raised in \cite{nagy},  whether the spectra of such operators is always non-negative.

The spectral theory of multiplication operators fits very well in the framework of the subject of the paper as such operators are naturally defined not only on the algebra $B(\cl H)$ of all bounded linear operators on a Hilbert space $\cl H$ but also on its symmetrically normed ideals; the most suitable such ideal is the ideal $\cl C_2$ of Hilbert-Schmidt operators, where many questions on multiplication operators have straightforward solutions. For example, the restriction of a L\"uders operator to $\cl C_2$ is a positive operator on the Hilbert space $\cl C_2$ and hence its spectrum is contained in $\mathbb R_+:=[0,+\infty)$.

In this paper we obtain a new information about the spectra of multiplication operators. In particular, we solve one of the problems, posed in \cite{magajna}, about the spectra of multiplication operators with positive coefficients when the coefficients on one side build a commutative family. For this we study first general questions about relations between spectra in case when the operators (or representations of Banach algebras) are  intertwined by map with trivial kernel or dense range.

Through the paper $B(X)$ denotes the algebra of all bounded linear operators on a Banach space $X$. For $T\in B(X)$ we write $\sigma(T)$ and $\rho(T)$ for the spectrum and the resolvent set of $T$ respectively.

\section{ Intertwining}

Recall that a bounded linear operator $S$ on a Banach space $X$ is {\it generalized scalar} if it admits a $C^{\infty}(\mathbb{R}^2)$-calculation, that is there is a unital continuous homomorphism $f\mapsto f(S)$ from $C^{\infty}(\mathbb{R}^2)$ to $B(X)$ satisfying condition $z(S) = S$, where $z(x,y) = x+iy$. In this case there are numbers $k\in \mathbb{N}$ such that $\|f(S)\| \le C\|f\|_{k,\sigma(S)}$ where for each compact $F\subset \mathbb{C}$, we denote $ \|f\|_{k,F} = \sum_{0\le i,j\le k}\sup_{z=x+iy\in F} |\frac{\partial^{i+j}{f}}{\partial{x}^i\partial{y}^j}|$. The minimal such $k$ is the {\it order} of $S$.
Given arbitrary bounded operator $T$ on $X$, the local resolvent set $\rho_T(x)$ of $T$ at the point $x\in X$ is defined as the union of all open subsets $U$ of $\mathbb C$ for which there is an analytic function $f:U\to X$ which satisfies
$$(T-\lambda)f(\lambda)=x  \text{ for all }\lambda\in U.$$
The local spectrum $\sigma_T(x)$ of $T$ at $x\in X$ is defined as $\sigma_T(x):=\mathbb C\setminus\rho_T(x)$.
The resolvent set $\rho(T)$ is always a subset of $\rho_T(x)$ and hence $\sigma_T(x)\subset\sigma(T)$. For all subsets $F\subset \mathbb C$ we define the local spectral subspace $X_T(F)$ of $T$ to be
$$X_T(F)=\{x\in X:\sigma_T(x)\subset F\}.$$

The following result was proved in \cite[chapter 3.5]{laursen-neumann} for more general operators but under the condition that the intertwining operator $\Phi$ is bounded.
\begin{theorem}\label{Int} Let $X$, $Y$ be Banach spaces and
let $\Phi: X\to Y$ be a linear map with dense range. Suppose that
\begin{equation}\label{intert}
\Phi T =  S \Phi,
 \end{equation}
 where $T$ is a linear operator on $X$ and $S$ is a generalized scalar operator on $Y$. Then $\sigma(S)\subset \overline{\sigma(T)}$.
\end{theorem}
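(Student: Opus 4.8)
The plan is to read the inclusion off the local spectral theory of the generalized scalar operator $S$. Generalized scalar operators are decomposable (see \cite{laursen-neumann}), so for every closed $F\subset\mathbb{C}$ the local spectral subspace $Y_S(F)$ is a closed $S$-invariant subspace with $\sigma(S|_{Y_S(F)})\subset F$, and consequently $Y_S(F)=Y$ forces $\sigma(S)\subset F$. Hence it suffices to establish $Y=Y_S(\overline{\sigma(T)})$. Since $Y_S(\overline{\sigma(T)})$ is closed and $\Phi$ has dense range, this reduces to proving the inclusion $\Phi(X)\subset Y_S(\overline{\sigma(T)})$, that is, $\sigma_S(\Phi x)\subset\overline{\sigma(T)}$ for every $x\in X$.

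First I would record the elementary intertwining of resolvents: \eqref{intert} gives $\Phi(T-\mu)=(S-\mu)\Phi$, hence $(S-\mu)\Phi(T-\mu)^{-1}=\Phi$ on $\rho(T)$, so that $\Phi(T-\mu)^{-1}=(S-\mu)^{-1}\Phi$ whenever $\mu\in\rho(T)\cap\rho(S)$. Fix $\lambda_0\notin\overline{\sigma(T)}$ and choose $\delta>0$ with $\overline{B(\lambda_0,\delta)}\cap\overline{\sigma(T)}=\emptyset$, so that $\overline{B(\lambda_0,\delta)}\subset\rho(T)$. Pick $h\in C^\infty(\mathbb{C})$ with $h(z)=(z-\lambda_0)^{-1}$ for $|z-\lambda_0|\ge\delta/2$ and set $p=1-(z-\lambda_0)h$; then $p\in C^\infty(\mathbb{C})$ has compact support with $\supp p\subset\overline{B(\lambda_0,\delta/2)}\subset\rho(T)$, and the calculus yields $(S-\lambda_0)h(S)=h(S)(S-\lambda_0)=I-p(S)$. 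Thus $\lambda_0\in\rho(S)$ as soon as $p(S)=0$, and since $p(S)$ is bounded and $\Phi(X)$ is dense it is enough to prove $p(S)\Phi x=0$ for all $x$. Everything therefore comes down to one key assertion: $f(S)\Phi x=0$ for every $f\in C^\infty(\mathbb{C})$ with compact support satisfying $\supp f\subset\rho(T)$.

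To prove the key assertion I would exploit that the $C^\infty$-calculus of $S$ carries a Cauchy--Pompeiu representation $f(S)=-\frac1\pi\int_{\mathbb{C}}\bar\partial f(\mu)\,(S-\mu)^{-1}\,dA(\mu)$, understood in the regularized sense supplied by the order-$k$ estimate near $\sigma(S)$. The point is that the companion $X$-valued integral $g:=-\frac1\pi\int_{\mathbb{C}}\bar\partial f(\mu)\,(T-\mu)^{-1}x\,dA(\mu)$ is genuinely well defined, because $\supp\bar\partial f\subset\supp f\subset\rho(T)$ makes the integrand continuous; moreover $g=0$ by Stokes' theorem, since, writing $u(\mu)=(T-\mu)^{-1}x$, the function $fu$ is smooth with compact support and $\bar\partial(fu)=\bar\partial f\cdot u$ as $u$ is holomorphic on $\supp f$. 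Transferring through $\Phi$ by the resolvent intertwining $\Phi(T-\mu)^{-1}=(S-\mu)^{-1}\Phi$ on $\rho(T)\cap\rho(S)$ and the linearity of $\Phi$ on the finite Riemann sums approximating the two integrals, I would identify $f(S)\Phi x$ with $\Phi g=\Phi 0=0$.

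The main obstacle is precisely the unboundedness of $\Phi$. Because $\Phi$ need not be continuous, $\lambda\mapsto\Phi(T-\lambda)^{-1}x$ need not be analytic, nor even measurable, so the classical argument of \cite{laursen-neumann}, which simply takes $f(\lambda)=\Phi(T-\lambda)^{-1}x$ as a local resolvent, is unavailable. The role of the generalized scalar hypothesis is to furnish the continuity estimate $\|f(S)\|\le C\|f\|_{k,\sigma(S)}$, which controls the $Y$-side sums uniformly and allows one to pass to the limit in the exact identity (valid for finite sums) $\Phi(\text{inner sum})=(\text{outer sum})$ even though $\Phi$ is not bounded; the approximation must keep $\supp f$ at a positive distance from $\overline{\sigma(T)}$, which is exactly why one obtains the closed set $\overline{\sigma(T)}$ rather than $\sigma(T)$. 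I expect the careful justification of this passage to the limit—interchanging $\Phi$ with the $\bar\partial$-integral and controlling the contribution of $\sigma(S)\cap\supp f$ through the order-$k$ bound—to be the delicate technical core, whereas the reductions in the first two paragraphs are routine once decomposability of $S$ is invoked.
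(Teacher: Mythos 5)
Your opening reduction coincides with the paper's: $Y_S(F)$ is closed, $Y_S(F)=Y$ forces $\sigma(S)\subset F$, so everything comes down to showing $\Phi(X)\subset Y_S(\overline{\sigma(T)})$. But the route you take to this inclusion has a genuine gap, in fact two. First, the Cauchy--Pompeiu representation $f(S)=-\frac{1}{\pi}\int\bar\partial f(\mu)\,(S-\mu)^{-1}\,dA(\mu)$ is available (even in a ``regularized sense'') only for functions vanishing to order at least $k+1$ on $\sigma(S)$; your function $p=1-(z-\lambda_0)h$ satisfies $p(\lambda_0)=1$, and $\lambda_0\in\sigma(S)$ is exactly the case you must rule out, so on $\sigma(S)\cap\supp p$ the operator $(S-\mu)^{-1}$ does not exist and the order-$k$ resolvent growth makes $|\bar\partial p(\mu)|\,\|(S-\mu)^{-1}\|$ non-integrable; the $Y$-side integral you need is simply undefined. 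Second, and more fundamentally, the final identification $f(S)\Phi x=\Phi g=\Phi 0=0$ requires interchanging $\Phi$ with a limit of Riemann sums: the $X$-side sums converge in norm to $g=0$, but since $\Phi$ is not assumed continuous you cannot conclude that $\Phi$ applied to those sums converges to $\Phi(0)$. The estimate $\|f(S)\|\le C\|f\|_{k,\sigma(S)}$ controls the calculus of $S$, not the discontinuous map $\Phi$, so the ``delicate technical core'' you defer is not a technicality but the entire difficulty, and this route does not close.

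The paper sidesteps all analysis at this step by invoking a purely algebraic range characterization of the local spectral subspace of a generalized scalar operator of order $k$ (Laursen--Neumann, Theorem 1.5.4): for every closed $F$ and every $p\ge k+3$ one has $Y_S(F)=\bigcap_{\lambda\notin F}(S-\lambda 1)^pY$. Taking $F=\overline{\sigma(T)}$, each $\lambda\notin F$ gives $(T-\lambda 1)^pX=X$, and the intertwining yields $\Phi(X)=\Phi\bigl((T-\lambda 1)^pX\bigr)=(S-\lambda 1)^p\Phi(X)\subset(S-\lambda 1)^pY$; intersecting over all such $\lambda$ gives $\Phi(X)\subset Y_S(F)$ with no continuity of $\Phi$ used anywhere, and density of the range plus closedness of $Y_S(F)$ finishes. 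That range identity is the key lemma your argument is missing; to repair your proof you would essentially have to reprove it, which is the content of the cited theorem.
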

\begin{proof}
For a closed subset $F\subset \mathbb{C}$, let $Y_S(F)$ denote the corresponding local spectral subspace.
It is well known (see e.g. \cite{ColFo}) that $Y_S(F)$ is closed. Let $k$ be the order of $S$. By \cite[Theorem 1.5.4]{laursen-neumann}, for each $p\geq k+3$,
\begin{equation}\label{spectral}
\cap_{\lambda\notin F}(S-\lambda 1)^pY =  Y_S(F).
 \end{equation}
 Now let $F = \overline{\sigma(T)}$. Then, for each $\lambda\notin F$, $(T-\lambda 1)^pX = X$. As  (\ref{intert}) clearly  implies  $\Phi (T-\lambda 1)^p =  (S-\lambda 1)^p \Phi$ for all $\lambda\in \mathbb{C}$, we obtain
\begin{eqnarray*}\Phi(X)& = &\Phi( \cap_{\lambda\notin F}(T-\lambda 1)^p(X)) \subset \cap_{\lambda\notin F}\Phi ((T-\lambda 1)^p(X)) \\
& =& \cap_{\lambda\notin F}(S-\lambda 1)^p(\Phi (X))
\subset  \cap_{\lambda\notin F}(S-\lambda 1)^p(Y)  = Y_F(S),
\end{eqnarray*}
whence $Y = \overline{\Phi (X)}\subset Y_S(F)$. It follows by \cite[Proposition~1.2.20]{laursen-neumann} that $\sigma(S) \subset F$.
\end{proof}

\begin{corollary}\label{Int-normal} Let $\mathcal H$ be a Hilbert space and $X$ be a Banach space.

{\rm (i)} Let $\Phi: X\to \mathcal H$ be a linear map with dense range, $T$ be a linear operator on $X$ and  $N$ be a bounded normal  operator  on $\mathcal H$ such that $\Phi T =  N\Phi$. Then $\sigma(N)\subset \overline{\sigma(T)}$.

 {\rm (ii)} If $\Psi: {\mathcal H}\to X$ is a linear injective bounded map, $T$ is a bounded linear operator on $X$ and $N$ is a bounded normal operator on ${\mathcal H}$ such that
$\Psi N =  T\Psi$
  then $\sigma(N)\subset \sigma(T)$.

\end{corollary}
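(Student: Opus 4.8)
The plan is to derive both parts of the corollary from Theorem~\ref{Int}, using the fact that a bounded normal operator on a Hilbert space is generalized scalar of order zero (indeed, its continuous functional calculus already yields a $C^\infty(\mathbb{R}^2)$-functional calculus satisfying $z(N)=N$, since polynomials in $x,y$ correspond to polynomials in $N,N^\ast$ and the spectral theorem gives the norm bound $\|f(N)\|\le \|f\|_{\sigma(N)}\le\|f\|_{0,\sigma(N)}$). This identification is the one structural input we need, and it is the point I would verify carefully first: that normality is exactly what makes the hypotheses of Theorem~\ref{Int} available.

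For part (i), the map $\Phi\colon X\to\mathcal H$ has dense range by assumption, $T$ is a linear operator on $X$, $N$ is normal and hence generalized scalar, and the intertwining relation $\Phi T=N\Phi$ is precisely~(\ref{intert}) with $S=N$ and $Y=\mathcal H$. Theorem~\ref{Int} then applies verbatim and gives $\sigma(N)\subset\overline{\sigma(T)}$. The only thing to notice is that no boundedness of $T$ is required, matching the generality in which Theorem~\ref{Int} was stated.

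For part (ii), the idea is to pass to adjoints so as to convert the injectivity of $\Psi$ into the dense-range hypothesis needed for part (i). I would take Banach-space adjoints of the relation $\Psi N=T\Psi$, obtaining $N^\ast\Psi^\ast=\Psi^\ast T^\ast$ on the dual spaces, where $\Psi^\ast\colon X^\ast\to\mathcal H^\ast$ has dense range precisely because $\Psi$ is injective and has a bounded adjoint (here $\Psi$ being bounded is used). Identifying $\mathcal H^\ast$ with $\mathcal H$ via the conjugate-linear Riesz map, $N^\ast$ becomes (unitarily equivalent to) a bounded normal operator, so part (i) applied to $\Psi^\ast$, $T^\ast$ and $N^\ast$ yields $\sigma(N^\ast)\subset\overline{\sigma(T^\ast)}$. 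Finally I would remove the closure and the adjoints: since $T$ is now assumed \emph{bounded}, $\sigma(T^\ast)=\sigma(T)$ is already closed, so the overline is superfluous, and $\sigma(N^\ast)=\overline{\sigma(N)}$ together with $\sigma(T^\ast)=\sigma(T)$ gives $\sigma(N)\subset\sigma(T)$.

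The main obstacle I expect is bookkeeping around the duality in part (ii): one must check that taking adjoints genuinely turns ``injective with closed-range-free'' into ``dense range'' (this needs $\Psi$ bounded so that $\Psi^\ast$ exists and $\overline{\operatorname{ran}\Psi^\ast}=({\ker\Psi})^\perp=X^\ast$), and that the conjugate-linear Riesz identification sends the Hilbert-space adjoint $N^\ast$ to a normal operator on $\mathcal H$ with $\sigma(N^\ast)=\overline{\sigma(N)}$, so that the complex conjugation introduced by duality is undone when comparing spectra. Once these identifications are pinned down, the inclusion drops out of part (i) without further analytic work.
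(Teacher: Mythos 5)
Your argument is correct and follows the paper's own proof essentially verbatim: part (i) is Theorem~\ref{Int} applied with $S=N$ (a bounded normal operator being generalized scalar via its continuous functional calculus), and part (ii) is obtained by dualizing, using that $\Psi^*:X^*\to\mathcal H^*$ has dense range because $\Psi$ is bounded and injective, that $\Psi^*T^*=N^*\Psi^*$, and that $N^*$ is normal on $\mathcal H^*$ with its natural scalar product. The one point to tidy is the conjugation bookkeeping in (ii): for the Banach-space adjoint one has $\sigma(N^*)=\sigma(N)$ outright (the conjugation coming from the conjugate-linear Riesz identification cancels the one coming from passing to the Hilbert-space adjoint), which is exactly what the paper asserts and what your remark that the conjugation is ``undone'' amounts to, so the displayed identity $\sigma(N^*)=\overline{\sigma(N)}$ should be read as referring to the Hilbert-space adjoint before that cancellation.
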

\begin{proof}
The first statement follows directly from Theorem~\ref{Int}. To see the second one
let $\Phi = \Psi^*: X^*\to \mathcal H^*$, then $\overline{\Phi X^*} = \mathcal H^*$. Clearly $\Phi T^* =  N^*\Phi$ and $N^*$ is a normal operator on $\mathcal H^*$ supplied with the natural scalar product. Since $\sigma(N^*) = \sigma(N)$, $\sigma(T^*) = \sigma(T)$ and $\sigma(T)$ is closed, the result follows from Theorem \ref{Int}.
\end{proof}

\begin{problem}\label{bound}
 Can one remove the conditions that $\Psi$ and $T$ are bounded?
\end{problem}

\begin{remark}\label{rem1}
\emph{ (i) The proof of Theorem \ref{Int}  needs only one property of $S$: if, for some closed subset $F\subset \mathbb{C}$, the subspace $\cap_{\lambda\notin F}(S-\lambda 1)H$ is dense in $H$ then $F$ contains $\sigma(S)$. It would be interesting to clear the class of such operators. }

\emph{ (ii) The results of Theorem \ref{Int} and Corollary \ref{Int-normal} was proved by Krein \cite{Krein} for the case when $N$ is selfadjoint. There are extensions of Krein's results to operators with spectra on a Jordan curve and satisfying restrictions on the norms of resolvent (see \cite{Rus} and references therein).}
\end{remark}

\bigskip

Let $A$ be a unital regular Banach $*$-algebra of functions on a compact set $K$. Let $\pi: A\to B(\mathcal H)$ be an (unital) injective $*$-representation of $A$ on a Hilbert space $\mathcal H$, and $\tau: A\to B(X)$ be a (unital) representation on a Banach space $X$.
We say that an operator $\Phi: X\to \mathcal H$ {\it intertwines $\tau$ with $\pi$} if $\Phi\tau(a) = \pi(a)\Phi$ for all $a\in A$.

\begin{corollary}\label{repr}
Suppose that $\pi$ is injective. If there is a bounded linear operator $\Phi: X\to \mathcal H$ with dense image, which intertwines $\pi$ with $\tau$, then $\sigma(\tau(a)) = \sigma(a)$ for each $a\in A$.
\end{corollary}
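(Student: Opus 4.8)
The plan is to prove $\sigma(\tau(a))=\sigma(a)$ by squeezing both sets between a pair of inclusions, one coming from the homomorphism property of $\tau$ and one from the intertwining with the normal operator $\pi(a)$. Two structural facts organize everything. First, since $A$ is commutative and $\pi$ is a unital $*$-representation, $\pi(a)$ commutes with $\pi(a)^*=\pi(a^*)$, so $\pi(a)$ is \emph{normal} for every $a\in A$; this is exactly what makes Corollary~\ref{Int-normal} available. Second, as $A$ is a regular semisimple commutative Banach algebra whose carrier space is $K$, its Gelfand transform realizes each $a$ as a function on $K$ and $\sigma(a)=a(K)$, a compact subset of $\mathbb{C}$.

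I would first record the two easy inclusions. Because $\tau$ is a unital algebra homomorphism, if $a-\lambda 1$ is invertible in $A$ then $\tau(a)-\lambda I=\tau(a-\lambda 1)$ is invertible in $B(X)$ (with inverse $\tau((a-\lambda 1)^{-1})$), so $\sigma(\tau(a))\subseteq\sigma(a)$. Next, the intertwining hypothesis reads $\Phi\,\tau(a)=\pi(a)\,\Phi$ with $\Phi$ bounded of dense image and $\pi(a)$ normal; applying Corollary~\ref{Int-normal}(i) with $T=\tau(a)$ and $N=\pi(a)$ gives $\sigma(\pi(a))\subseteq\overline{\sigma(\tau(a))}$. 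Since $\tau(a)\in B(X)$ is bounded, $\sigma(\tau(a))$ is already closed, so in fact $\sigma(\pi(a))\subseteq\sigma(\tau(a))$.

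The remaining, and main, step is $\sigma(a)\subseteq\sigma(\pi(a))$; equivalently, that the injective $*$-representation $\pi$ preserves the spectrum, $\sigma(\pi(a))=\sigma(a)=a(K)$. This is where injectivity and regularity are used. Let $B=\overline{\pi(A)}\subseteq B(\mathcal{H})$; it is a commutative unital $C^*$-algebra, hence $B\cong C(\Omega)$ for its maximal ideal space $\Omega$, and by spectral permanence $\sigma(\pi(a))=\sigma_B(\pi(a))=\{\chi(\pi(a)):\chi\in\Omega\}$. Each $\chi\in\Omega$ gives a unital character $\chi\circ\pi$ of $A$, i.e. a point $t_\chi\in K$ with $\chi(\pi(a))=a(t_\chi)$, and $\chi\mapsto t_\chi$ is continuous, so its image $L=\{t_\chi:\chi\in\Omega\}$ is a compact (hence closed) subset of $K$ and $\sigma(\pi(a))=a(L)$. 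If $L$ were a proper subset of $K$, pick $t_0\in K\setminus L$; regularity furnishes $b\in A$ with $b(t_0)\neq0$ and $b|_L=0$, whence $\chi(\pi(b))=b(t_\chi)=0$ for all $\chi\in\Omega$, forcing $\pi(b)=0$ and contradicting injectivity of $\pi$ (as $b\neq0$). Therefore $L=K$ and $\sigma(\pi(a))=a(K)=\sigma(a)$ for every $a\in A$.

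Combining the three inclusions yields $\sigma(a)=\sigma(\pi(a))\subseteq\sigma(\tau(a))\subseteq\sigma(a)$, so $\sigma(\tau(a))=\sigma(a)$, as required. I expect the spectral-permanence argument of the last paragraph, together with the regularity-plus-injectivity density step, to be the only delicate point; the first two inclusions are a routine combination of the homomorphism property with Corollary~\ref{Int-normal}.
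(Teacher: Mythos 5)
Your proof is correct and follows essentially the same route as the paper: the easy inclusion $\sigma(\tau(a))\subseteq\sigma(a)$ from the homomorphism property, Corollary~\ref{Int-normal} applied to the normal operator $\pi(a)$ to get $\sigma(\pi(a))\subseteq\sigma(\tau(a))$, and the identification $\sigma(\pi(a))=\sigma(a)$ via the character space of $\overline{\pi(A)}$ together with the regularity-plus-injectivity argument showing the adjoint map hits all of $K$. The paper phrases the last step in terms of invertibility rather than spectral permanence, but the underlying argument is identical.
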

\begin{proof}
Clearly, $\tau(a)$ is invertible if $a$ is invertible, whence $\sigma(\tau(a)) \subset \sigma(a)$. Similarly  $\sigma(\pi(a)) \subset \sigma(a)$. Conversely suppose that  $\pi(a)$ is invertible. Since the closure $M$ of $\pi(A)$ is a C*-algebra, $\pi(a)$ is invertible in $M$. This means that $\chi(\pi(a))\neq 0$ for all $\chi\in \Omega(M)$, where $\Omega(M)$ is the set of all characters of $M$. Let $\pi^*: \Omega(M)\to K$ be the map adjoint to $\pi$: $\pi^*(\chi)(x) = \chi(\pi(x))$ for all $x\in A$. Then $K_0:= \pi^*(\Omega(M))$ is a compact set of $K$.  If  $K_0\neq K$ then by regularity of $A$ there is $0\neq c\in A$ with $c(t) = 0$ for all $t\in K_0$. Then $\chi(\pi(c)) = 0$ for all $\chi\in \Omega(M)$, whence $\pi(c) = 0$. This contradicts to the injectivity of $\pi$. It follows that $K_0 = K$ and therefore $a(t)\neq 0$ for all $t\in K$. Thus $a$ is invertible. We proved that  $\sigma(\pi(a)) = \sigma(a)$ for all $a\in A$.

 Clearly, the operator $\pi(a)$ is normal whence $\sigma(\pi(a))\subset \sigma(\tau(a))$ by Corollary \ref{Int-normal}.  Therefore
$$\sigma(a)=\sigma(\pi(a))\subset\sigma(\tau(a))\subset\sigma(a),$$
giving the statement.
 \end{proof}

 \begin{problem}\label{non-comm} For which non-commutative Banach $*$-algebras this is true?
 \end{problem}

The answer is positive for C*-algebras. We will deduce it from a much more general result.

\begin{theorem}\label{Cstar} Any representation $\tau$ of a C*-algebra $A$ on a Banach space $X$ is a topological isomorphism of $A/\ker(\tau)$ onto $\tau(A)$.
\end{theorem}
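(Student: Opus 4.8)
The plan is to pass to the quotient C*-algebra $B:=A/\ker(\tau)$ and to the induced homomorphism $\bar\tau\colon B\to B(X)$, which is injective by construction. First I would collect the soft facts: since $\tau$ is a bounded unital representation, $\ker(\tau)$ is a closed two-sided ideal, so $B$ is again a unital C*-algebra, $\bar\tau$ is a bounded unital injective homomorphism, and $M:=\|\bar\tau\|=\|\tau\|$. As $\bar\tau$ preserves invertibility we get $\sigma(\bar\tau(b))\subset\sigma_B(b)$, hence the spectral radii satisfy $r(\bar\tau(b))\le r_B(b)$ for all $b\in B$; the whole content of the theorem is the reverse inequality. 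Concretely, the target is the lower estimate $\|\bar\tau(b)\|\ge M^{-1}\|b\|_B$, which together with boundedness and injectivity of $\bar\tau$ exhibits it as a topological isomorphism onto $\tau(A)$ (and incidentally forces $\tau(A)$ to be norm closed).

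By the C*-identity this lower estimate reduces to a statement about positive elements: it suffices to prove $r(\bar\tau(h))=\|h\|_B$ for every positive $h\in B$. Indeed, writing $h=b^*b$ and using $\|b\|_B^2=\|h\|_B=r(\bar\tau(h))\le\|\bar\tau(h)\|=\|\bar\tau(b^*)\bar\tau(b)\|\le M\|b\|_B\,\|\bar\tau(b)\|$, the claimed equality $r(\bar\tau(h))=\|b\|_B^2$ yields $\|b\|_B\le M\|\bar\tau(b)\|$ after cancelling $\|b\|_B$. So everything comes down to showing that, for self-adjoint $h$, the homomorphism $\bar\tau$ does not lose the outer part of the spectrum.

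The decisive step, and the place where the C*-structure of $A$ is used, is a resolvent estimate. For self-adjoint $h$ and non-real $\lambda$ the element $(h-\lambda)^{-1}$ is normal in $B$, so $\|(h-\lambda)^{-1}\|_B=\operatorname{dist}(\lambda,\sigma_B(h))^{-1}\le|\operatorname{Im}\lambda|^{-1}$; applying $\bar\tau$ to $\bar\tau((h-\lambda)^{-1})=(\bar\tau(h)-\lambda)^{-1}$ gives $\|(\bar\tau(h)-\lambda)^{-1}\|\le M|\operatorname{Im}\lambda|^{-1}$, while $\sigma(\bar\tau(h))\subset\sigma_B(h)\subset\mathbb R$. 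By the resolvent characterisation of generalized scalar operators (\cite{ColFo}) this makes $\bar\tau(h)$ a generalized scalar operator, so it carries a $C^\infty$ functional calculus $g\mapsto g(\bar\tau(h))$ obeying $\|g(\bar\tau(h))\|\le C\|g\|_{k,\sigma(\bar\tau(h))}$. In particular, if $g\equiv 0$ on a neighbourhood of $\sigma(\bar\tau(h))$ then $\|g\|_{k,\sigma(\bar\tau(h))}=0$ and hence $g(\bar\tau(h))=0$.

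I would then argue by contradiction. Suppose $\alpha:=\|h\|_B=\max\sigma_B(h)$ does not belong to the compact set $\sigma(\bar\tau(h))\subset[0,\alpha]$, and pick $g\in C^\infty(\mathbb R^2)$, a function of the real variable alone, with $g\equiv1$ near $\alpha$ and $g\equiv0$ on a neighbourhood of $\sigma(\bar\tau(h))$. Then $g(\bar\tau(h))=0$ by the previous paragraph, whereas the element $g(h)\in B$ defined by the continuous functional calculus is nonzero, since $g(\alpha)=1$ and $\alpha\in\sigma_B(h)$. The contradiction with injectivity of $\bar\tau$ is reached once one knows $\bar\tau(g(h))=g(\bar\tau(h))$, and establishing this compatibility of the two functional calculi is the main obstacle: the calculi agree on polynomials, but passing to a general $g$ must be done with care. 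The clean way I would handle it is the Helffer--Sj\"ostrand formula $g(h)=-\tfrac1\pi\int_{\mathbb C}\bar\partial\tilde g(\lambda)\,(h-\lambda)^{-1}\,dA(\lambda)$, where $\tilde g$ is an almost analytic extension of $g$ whose $\bar\partial$ vanishes to high order on $\mathbb R$; the bound $\|(h-\lambda)^{-1}\|\le|\operatorname{Im}\lambda|^{-1}$ makes the integral absolutely norm-convergent, so the bounded map $\bar\tau$ passes inside and produces the identical formula for $\bar\tau(h)$, giving $\bar\tau(g(h))=g(\bar\tau(h))=0$. This contradiction shows $\alpha\in\sigma(\bar\tau(h))$, whence $r(\bar\tau(h))=\|h\|_B$ and the theorem follows.
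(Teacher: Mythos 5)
Your proof is correct and follows essentially the same strategy as the paper's: pass to the injective quotient, prove that the spectral radius is preserved on positive elements by exhibiting a function that vanishes near $\sigma(\bar\tau(h))$ but not on $\sigma_B(h)$ and deriving a contradiction with injectivity, then conclude via the $C^*$-identity exactly as in the displayed chain $\|b\|^2=r(b^*b)=r(\tau(b^*b))\le\|\tau\|\,\|b\|\,\|\tau(b)\|$. The one point where you diverge is the justification of the decisive locality statement: the paper transports the continuous functional calculus through $\tau$ by setting $f(\tau(a)):=\tau(f(a))$ and relies (implicitly) on the fact that such a calculus is supported on $\sigma(\tau(a))$, whereas you establish the resolvent bound $\|(\bar\tau(h)-\lambda)^{-1}\|\le M|\operatorname{Im}\lambda|^{-1}$ and push a norm-convergent Helffer--Sj\"ostrand integral through the bounded map $\bar\tau$, which makes the compatibility $\bar\tau(g(h))=g(\bar\tau(h))$ and the vanishing $g(\bar\tau(h))=0$ explicit. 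This spells out precisely the step the paper leaves to the reader; both arguments are sound.
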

\begin{proof} Since $A/\ker(\tau)$ is a C*-algebra we may assume that $\tau$ is injective.

Let us firstly show that $r(\tau(a)) = r(a)$ for each positive element $a\in A$ (where by $r$ we denote the spectral radius). The inequality $r(\tau(a)) \le r(a)$ follows from the evident inclusion $\sigma(\tau(a)) \subset \sigma(a)$. Note that the operator $T = \tau(a)$ admits calculus of continuous functions ($f(T) := \tau(f(a)$).  If $r(\tau(a)) <  r(a)$, choose a number $t_0$ between $r(\tau(a))$ and $ r(a)$ and a continuous function $f$ on $\mathbb{R}$ with $f(t) = 0$ for $t\in (-\infty,t_0)$, $f(t) = 1$ for $t\ge r(\tau(a))$. Then $\tau(f(a)) = f(\tau(a)) = 0$ while $f(a)\neq 0$. This contradicts to the injectivity of $\tau$.

Now for arbitrary $a\in A$ we have
\begin{eqnarray*}\|a\|^2 &=& \|a^*a\| = r(a^*a) = r(\tau(a^*a)) \le \|\tau(a^*a)\|\\
 & =& \|\tau(a^*)\tau(a)\| \le \|\tau(a^*)\|\|\tau(a)\|\le \|\tau\|\|a\|\|\tau(a)\|,\end{eqnarray*}
whence $\|a\|\le \|\tau\|\|\tau(a)\|.$ We proved that the map $\tau(a)\mapsto a$ is continuous.
\end{proof}
For the case when $X$ is a Hilbert space, this result was obtained by Pitts \cite{pitts}, whose proof was based on the Dixmier-Day Theorem.

\begin{corollary}\label{spectral} Let $\pi$ be a $*$-representation of a C*-algebra on a Hilbert space, then for any Banach space representation $\tau$ with $\ker \tau = \ker \pi$, one has
$$\sigma(\pi(a)) = \sigma(\tau(a)) \text{  for all  } a\in A.$$
\end{corollary}
\begin{proof}
The equality immediately follows from Theorem \ref{Cstar} if $\sigma(\tau(a))$ is the spectrum of $\tau(a)$ in $\tau(A)$. To see that the latter coincides with the spectrum of $\tau(a)$ in $B(X)$ it suffices to show that if $\tau(a)$ is invertible in $B(X)$, then $a$ is invertible in $A/\ker{\pi}$. Replacing   $A$ by $A/\ker{\pi}$, suppose that $a$ is not invertible in $A$. Then either $a^*a$ or $aa^*$ is not invertible. In the first case $|a| = (a^*a)^{1/2}$ is not invertible whence there is a
sequence $x_n \in A$ with $\|x_n\| = 1 $ and $\||a|x_n\|\to 0$  whence $\|ax_n \|\to 0$. It follows that $\|\tau(a)\tau(x_n)\|\to 0$. Since $\inf_n\|\tau(x_n)\| > 0$, the operator $\tau(a)$ is not invertible. Similarly one treats the case when $aa^*$ is not invertible.
\end{proof}

It remains to note that if representations  $\pi$ and $\tau$ are intertwined by a (non necessarily continuous) injective operator $W$ with dense range, then their kernels coincide. If $W$ is not injective then $\sigma(\tau(a)) = \sigma(\pi_1(a))$, where $\pi_1$ is the restriction of $\pi$ to $(\ker W)^{\bot}$.

\medskip

\section{Approximate inverse intertwinings}

Let us say that a Banach space $X$ is supplied with a weak topology $\omega$ if in its adjoint $X^*$ a $*$-weakly dense closed subspace $M$ is chosen and $\omega = \sigma(X,M)$. We denote by $B_\omega(X)$ the space of all  $\omega$-continuous operators  $T\in B(X)$ (clearly $T\in B_{\omega}(X)$ if and only if $T^*M\subset M$). Similarly $B_{\omega}(X,Y)$ is the subspace of $B(X,Y)$ consisting of operators continuous with respect to the weak topologies in $X$ and $Y$.

\begin{definition}\label{norm-int}\rm
Let $X, Y$ be Banach spaces with fixed weak topologies, and let $T\in B_{\omega}(X)$, $S\in B_{\omega}(Y)$ be operators intertwined by some operator $\Psi\in B_{\omega}(X,Y)$ :
$\Psi T= S\Psi$.  A sequence of operators $F_n\in B_{\omega}(Y,X)$ is called an {\it approximate inverse intertwining} (AII) for the triple $(T,S,\Psi)$  with respect to these topologies if $F_n\Psi \to 1_X$, $\Psi F_n \to 1_Y$ and $TF_n - F_nS \to 0$ in the point-weak topologies.
\end{definition}

\begin{theorem}\label{point}
If an intertwining triple $(T,S,\Psi)$ has an approximate inverse intertwining then all eigenvalues of $S$ belong to $\sigma_{B_{\omega}(X)}(T)$.
\end{theorem}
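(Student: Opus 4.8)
The plan is to argue by contradiction on a single eigenvalue. Fix an eigenvalue $\lambda$ of $S$, choose $0\neq y\in Y$ with $Sy=\lambda y$, and suppose toward a contradiction that $\lambda\notin\sigma_{B_\omega(X)}(T)$; that is, $T-\lambda 1_X$ is invertible \emph{inside} the algebra $B_\omega(X)$, so that its inverse $R:=(T-\lambda 1_X)^{-1}$ is itself $\omega$-continuous. The first step is a one-line identity exploiting the eigenvector: since $(S-\lambda 1_Y)y=0$, for every $n$
$$(T-\lambda 1_X)F_ny=(TF_n-F_nS)y+F_n(S-\lambda 1_Y)y=(TF_n-F_nS)y.$$
The AII condition $TF_n-F_nS\to 0$ in the point-weak topology then says exactly that the right-hand side tends to $0$ weakly, so $(T-\lambda 1_X)F_ny\to 0$ in $\omega$.

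Next I would transport this weak convergence through the two available $\omega$-continuous maps. Applying $R$ and using $R(T-\lambda 1_X)=1_X$ gives $F_ny=R\,(T-\lambda 1_X)F_ny\to 0$ weakly, because $R$ is continuous for the weak topology. Applying $\Psi$, which is $\omega$-continuous by hypothesis, then yields $\Psi F_ny\to 0$ weakly in $Y$. But the AII condition $\Psi F_n\to 1_Y$ in the point-weak topology gives $\Psi F_ny\to y$ weakly. Uniqueness of weak limits now forces $y=0$, contradicting $y\neq 0$. It is worth noting that only two of the three defining conditions of an AII are actually used here; the relation $F_n\Psi\to 1_X$ plays no role in capturing eigenvalues.

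I expect the genuine content to sit not in the computation but in two topological checks, and these are where I anticipate the only real obstacle. First, the argument works only because the inverse $R$ is \emph{weakly} continuous, and this is precisely what the hypothesis $\lambda\notin\sigma_{B_\omega(X)}(T)$ provides: the spectrum is computed in the norm-closed unital subalgebra $B_\omega(X)$ of $B(X)$, so invertibility there delivers an $\omega$-continuous inverse rather than merely a bounded one. Second, I would record explicitly that the weak topology on $Y$ is Hausdorff, which is what legitimizes the uniqueness of weak limits: the subspace of $Y^*$ defining $\omega$ is $*$-weakly dense and therefore separates the points of $Y$, since if $g(y)=0$ for all $g$ in it, then the $*$-weakly continuous functional $\hat y$ vanishes on its $*$-weak closure, which is all of $Y^*$, forcing $y=0$. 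These two observations are the only places where the choice of weak topology, as opposed to the purely algebraic intertwining, does real work.
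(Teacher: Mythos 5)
Your proof is correct and follows essentially the same route as the paper's: the identity $(T-\lambda)F_ny=(TF_n-F_nS)y\to 0$, then the $\omega$-continuity of $(T-\lambda)^{-1}$ in $B_\omega(X)$ to get $F_ny\to 0$, then $\Psi F_ny\to 0$ versus $\Psi F_ny\to y$. Your added remarks --- that $F_n\Psi\to 1_X$ is not needed for this statement, and that Hausdorffness of $\omega$ (from the $*$-weak density of $M$ in $Y^*$) is what justifies uniqueness of limits --- are accurate and merely make explicit what the paper leaves implicit.
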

\begin{proof}
Let $Sy = \lambda y$ for some non-zero $y\in Y$ and $\lambda\in \mathbb{C}$.
Then $$(T-\lambda)F_ny = F_n(S-\lambda)y + (TF_n-F_nS)y = (TF_n-F_nS)y \to 0.$$ Assume $\lambda\notin \sigma_{B_{\omega}(X)}(T)$. Then $(T-\lambda)^{-1}$ is $\omega$-continuous, giving  $F_ny\to 0$ and therefore $\Psi F_ny \to 0$. As $\Psi F_ny\to y$, we get $y = 0$. A contradiction.
\end{proof}

Note that if $X$ is reflexive then $B_{\omega}(X) = B(X)$ and $\sigma_{B_{\omega}(X)}(T)= \sigma(T)$ which simplifies the statement of Theorem \ref{point}.

\begin{problem}\label{FullSp}
 Is it true that $\sigma(S)\subset \sigma(T)$, under the  assumptions of Theorem \ref{point}?
\end{problem}
For a Banach space $X$, let $\ell_{\infty}(X)$ be the space of all bounded sequences with entries in $X$ and $c_0(X)$ be the subspace of all sequences convergent (in norm) to $0$. Let $\widetilde{X} = \ell_{\infty}(X)/c_0(X)$. Each $T\in B(X)$ defines, by componentwise action, an operator  on $\ell_\infty(X)$ which leaves $ c_0(X)$ invariant, and hence induces a bounded operator $\tilde T$ on the quotient space $\tilde X$.
\begin{definition}\rm
 We say that an intertwining triple $(T,S,\Psi)$ is {\it strongly approximate invertible} if the corresponding triple $(\widetilde{T},\widetilde{S},\widetilde{\Psi})$ has an approximate inverse intertwining.
\end{definition}

\begin{corollary}\label{StrInt}
If an intertwining triple $(T,S,\Psi)$ is strongly approximate invertible then $\sigma_{\rm ap}(S)\subset \sigma(T)$, where $\sigma_{\rm ap}(S)$ is the approximate spectrum of $S$.
If, in addition, $\Psi$ has dense range then $\sigma(S)\subset\sigma(T)$.
\end{corollary}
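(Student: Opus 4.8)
The plan is to obtain both inclusions from Theorem~\ref{point} applied to the triple $(\widetilde{T},\widetilde{S},\widetilde{\Psi})$, supplemented by the standard translation between approximate eigenvalues and genuine eigenvalues in the quotient space. First I would record the identification of $\sigma_{\rm ap}(S)$ with the point spectrum of $\widetilde{S}$: by definition $\lambda\in\sigma_{\rm ap}(S)$ precisely when there are $y_n\in Y$ with $\|y_n\|=1$ and $(S-\lambda)y_n\to 0$ in norm. Such a sequence, viewed in $\ell_\infty(Y)$, represents a class $\widetilde{y}\in\widetilde{Y}$ with $\|\widetilde{y}\|=1$, so $\widetilde{y}\neq 0$, and $(\widetilde{S}-\lambda)\widetilde{y}=0$ because $((S-\lambda)y_n)\in c_0(Y)$. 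Conversely, any eigenvector of $\widetilde{S}$ has a bounded representative $(y_n)$ with $\limsup\|y_n\|>0$ and $(S-\lambda)y_n\to 0$, which after passing to a subsequence and normalizing gives an approximate eigenvector of $S$. Hence $\sigma_{\rm ap}(S)$ is exactly the set of eigenvalues of $\widetilde{S}$.

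Next, since the triple $(\widetilde{T},\widetilde{S},\widetilde{\Psi})$ has an approximate inverse intertwining by the hypothesis of strong approximate invertibility, Theorem~\ref{point} shows that every eigenvalue of $\widetilde{S}$ lies in $\sigma_{B_{\tilde\omega}(\widetilde{X})}(\widetilde{T})$. It then remains to prove the inclusion $\sigma_{B_{\tilde\omega}(\widetilde{X})}(\widetilde{T})\subset\sigma(T)$. Take $\lambda\notin\sigma(T)$, so that $(T-\lambda)^{-1}\in B(X)$; its componentwise action induces a bounded operator on $\widetilde{X}$ that is a two-sided inverse of $\widetilde{T}-\lambda$. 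Provided $\widetilde{X}$ carries a weak topology $\tilde\omega$ for which every operator induced from $B(X)$ is $\tilde\omega$-continuous, this inverse lies in $B_{\tilde\omega}(\widetilde{X})$, so $\lambda\notin\sigma_{B_{\tilde\omega}(\widetilde{X})}(\widetilde{T})$. Chaining the three observations yields $\sigma_{\rm ap}(S)\subset\sigma(T)$, which is the first assertion.

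For the second assertion I would argue directly using the intertwining and density. Assume $\Psi$ has dense range and let $\lambda\notin\sigma(T)$. By the first part $\lambda\notin\sigma_{\rm ap}(S)$, so $S-\lambda$ is bounded below and in particular has closed range. From $\Psi(T-\lambda)=(S-\lambda)\Psi$ and the invertibility of $T-\lambda$ we obtain $\Psi=(S-\lambda)\Psi(T-\lambda)^{-1}$, whence $\ran(S-\lambda)\supset\Psi(X)$, a dense subspace of $Y$. A closed subspace that contains a dense set is the whole space, so $S-\lambda$ is surjective as well, hence invertible; thus $\lambda\notin\sigma(S)$, giving $\sigma(S)\subset\sigma(T)$.

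The one step demanding real care is the inclusion $\sigma_{B_{\tilde\omega}(\widetilde{X})}(\widetilde{T})\subset\sigma(T)$, and this is exactly where the passage to $\widetilde{X}$ earns its keep. On $X$ itself a resolvent $(T-\lambda)^{-1}$ need not be $\omega$-continuous even when $T\in B_\omega(X)$, which is precisely why Theorem~\ref{point} produces only the possibly larger spectrum $\sigma_{B_\omega(X)}(T)$; the whole point of working in $\widetilde{X}$ is to equip it with a weak topology that is large enough to keep $\widetilde{T},\widetilde{S},\widetilde{\Psi}$ and the induced resolvents continuous, yet small enough that the required approximate inverse intertwining still exists. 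I expect the main obstacle to be confirming that such a topology can be fixed compatibly with the definition of strong approximate invertibility, so that the induced inverse of $\widetilde{T}-\lambda$ genuinely belongs to $B_{\tilde\omega}(\widetilde{X})$; once this is in place, the remaining spectral bookkeeping is routine.
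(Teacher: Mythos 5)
Your proof is correct. The first half is essentially the paper's argument: identify $\sigma_{\rm ap}(S)$ with the point spectrum of $\widetilde{S}$, apply Theorem~\ref{point} to $(\widetilde{T},\widetilde{S},\widetilde{\Psi})$, and descend from $\sigma(\widetilde{T})$ to $\sigma(T)$ via the componentwise resolvent; the topological caveat you flag about $B_{\tilde\omega}(\widetilde{X})$ is real but is the same one the paper silently resolves by taking $\tilde\omega$ to be the weak Banach-space topology on $\widetilde{X}$, for which $B_{\tilde\omega}(\widetilde{X})=B(\widetilde{X})$ and hence $\sigma_{B_{\tilde\omega}(\widetilde{X})}(\widetilde{T})=\sigma(\widetilde{T})$. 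Where you genuinely diverge is the second assertion. The paper dualizes: since $\Psi$ has dense range, $\Psi^*$ is injective, the relation $\Psi^*S^*=T^*\Psi^*$ sends eigenvalues of $S^*$ to eigenvalues of $T^*$, and one concludes via the decomposition $\sigma(S)=\sigma_{\rm ap}(S)\cup\sigma_r(S)$ together with the fact that $\sigma_r(S)$ lies in the point spectrum of $S^*$. You instead argue primally: for $\lambda\notin\sigma(T)$ the first part makes $S-\lambda$ bounded below, hence injective with closed range, and the identity $\Psi=(S-\lambda)\Psi(T-\lambda)^{-1}$ shows that this closed range contains the dense subspace $\Psi(X)$, forcing surjectivity. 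Your route avoids adjoints and the residual-spectrum bookkeeping entirely and is self-contained (in effect you reprove the decomposition of $\sigma(S)$ on the fly); the paper's route is shorter if one takes that decomposition as known. Both are valid, and both use the boundedness of $\Psi$, $S$, $T$ guaranteed by Definition~\ref{norm-int}.
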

\begin{proof}  It is easy to see that if $\lambda\in \sigma_{\rm ap}(S)$ then $\lambda$ is an eigenvalue for $\widetilde{S}$. By Theorem \ref{point}, $\lambda\in \sigma(\widetilde{T})$ and hence $\lambda\in \sigma(T)$.

If $\Psi$ has dense range  then $\Psi^*\in B(Y^*, X^*)$ is injective. As $\Psi^*S^*=T^*\Psi^*$,  any eigenvalue $\lambda$ of $S^*$ is an eigenvalue of $T^*$ and hence in $\sigma(T^*)=\sigma(T)$. Since the residual spectrum $\sigma_r(S)$ of $S$ is a subset of the point spectrum of $S^*$ and $\sigma(S)=\sigma_{\rm ap}(S)\cup \sigma_r(S)$, we obtain $\sigma(S)\subset\sigma(T)$.
\end{proof}


Let ${\mathbb A}=\{A_j\}_{j\in J}$, ${\mathbb B}=\{B_j\}_{j\in J}$ ($J$ is finite or countable) be  families of operators on a Hilbert spaces $\mathcal H$ such that
\begin{equation}\label{coef}
\sum_{j\in J}||A_j||^2<\infty \text{ and }\sum_{j\in J}||B_j||^2<\infty.
\end{equation}
Then one can define a {\it multiplication operator} $\Delta:B(\cl H)\to B(\cl H)$ by
\begin{equation}\label{multop}
\Delta(X)=\sum_{j\in J}A_jXB_j.
\end{equation}
Clearly, $\Delta$ is bounded and preserves all symmetrically normed ideals of $B(\cl H)$. Let $ \Delta_{\cl C_2}$ be  the restriction of $\Delta$ to the ideal of all Hilbert-Schmidt operators $\cl C_2$ on $\cl H$. We are interested in the relations between the spectra of the operators $\Delta$ and $\Delta_{\cl C_2}$.

Let $\Psi_2: \cl C_2\to B(\cl H)$ be the identity inclusion. Then $(\Delta_{\cl C_2}, \Delta, \Psi_2)$ is an intertwining triple.

The following condition is close   in spirit to Voiculescu's notion of quasidiagonality with respect to a symmetrically normed ideal
\cite{voi2}.

\begin{definition}\label{2-semi}\rm
We say that a family ${\mathbb A} = \{A_j\}_{j\in J}$ of operators is $2$-semi\-dia\-gonal if
there exists a sequence of projections $P_n$ of finite rank such that
$P_n\to 1$ in the strong operator topology,  and $$\sup_n\sum_{j\in J}||[A_j,P_n]||_{\cl C_2}^2<\infty.$$
\end{definition}

\begin{theorem} \label{aii}
 Suppose that (\ref{coef}) holds and the family $\mathbb A=\{A_j\}_{j\in J}$ is $2$-semidiagonal.
Then all eigenvalues of $\Delta$ are contained in $\sigma(\Delta_{\cl C_2})$.
\end{theorem}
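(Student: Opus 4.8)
The plan is to produce an approximate inverse intertwining (AII) for the triple $(\Delta_{\cl C_2},\Delta,\Psi_2)$ and then invoke Theorem \ref{point}. I would equip $\cl C_2$ with its weak (Hilbert space) topology and $B(\cl H)$ with the $\sigma$-weak topology, i.e. take $M=\cl C_1$ (trace class) inside $B(\cl H)^*$; then $\Delta$, $\Delta_{\cl C_2}$ and $\Psi_2$ all lie in the relevant $B_\omega$-classes, normality of $\Delta$ following from (\ref{coef}). Since $\cl C_2$ is reflexive, $B_\omega(\cl C_2)=B(\cl C_2)$ and $\sigma_{B_\omega(\cl C_2)}(\Delta_{\cl C_2})=\sigma(\Delta_{\cl C_2})$, so once the AII is in hand Theorem \ref{point} gives exactly that every eigenvalue of $\Delta$ lies in $\sigma(\Delta_{\cl C_2})$.

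For the approximating maps I would take the \emph{one-sided} compressions $F_n:B(\cl H)\to\cl C_2$, $F_n(Y)=P_nY$, where $\{P_n\}$ are the finite rank projections from the $2$-semidiagonality of $\mathbb A$. Each $P_nY$ has finite rank, hence lies in $\cl C_2$, and $F_n$ is $\sigma$-weak-to-weak continuous. The choice of a left compression rather than the symmetric $P_nYP_n$ is the decisive point, since it is matched to the one-sided hypothesis:
$$\Delta_{\cl C_2}F_n(Y)-F_n\Delta(Y)=\Delta(P_nY)-P_n\Delta(Y)=\sum_{j}[A_j,P_n]\,Y B_j,$$
in which only the commutators $[A_j,P_n]$ appear; the two-sided compression would instead bring in the uncontrolled $[B_j,P_n]$. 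The two remaining AII conditions are easy: $F_n\Psi_2(X)=P_nX\to X$ even in $\cl C_2$-norm for $X\in\cl C_2$, and $\Psi_2F_n(Y)=P_nY\to Y$ $\sigma$-weakly because $\rho P_n\to\rho$ in $\cl C_1$ for every $\rho\in\cl C_1$. So everything reduces to showing $D_n(Y):=\sum_j[A_j,P_n]YB_j\to0$ weakly in $\cl C_2$, i.e. $\tr(Z^*D_n(Y))\to0$ for every $Z\in\cl C_2$.

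The heart of the matter, which I expect to be the main obstacle, is the following fact: if $A$ is bounded with $\sup_n\|[A,P_n]\|_{\cl C_2}<\infty$ and $K\in\cl C_2$, then $\tr([A,P_n]K)\to0$. Note that without the $\cl C_2$-bound on the commutators this is false—strong convergence $[A,P_n]\to0$ alone does not force the trace to vanish (a block construction keeps it equal to $1$)—so this is precisely where $2$-semidiagonality enters. To prove it I would write $\tr([A,P_n]K)=\tr(A[P_n,K])$ and split $[P_n,K]=P_nK(1-P_n)-(1-P_n)KP_n$ into its two off-diagonal blocks. In each resulting trace the ``diagonal'' contribution vanishes because $(1-P_n)P_n=0$, leaving terms of the form $\tr\big((1-P_n)[A,P_n]\cdot K(1-P_n)\big)$ and $\tr\big(P_n[A,P_n]\cdot(1-P_n)K\big)$; each is bounded by $\|[A,P_n]\|_{\cl C_2}$ times $\|K(1-P_n)\|_{\cl C_2}$ or $\|(1-P_n)K\|_{\cl C_2}$, and the latter two tend to $0$ since $K\in\cl C_2$ and $P_n\to1$ strongly. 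Thus a bounded commutator-corner is paired against a vanishing corner of the fixed Hilbert--Schmidt element.

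Finally I would assemble the estimate over $j$ with $K_j=YB_jZ^*\in\cl C_2$. Combining the bound above with the Cauchy--Schwarz inequality in $j$ gives
$$|\tr(Z^*D_n(Y))|\le\Big(\sum_j\|[A_j,P_n]\|_{\cl C_2}^2\Big)^{1/2}\Big[\big(\sum_j\|K_j(1-P_n)\|_{\cl C_2}^2\big)^{1/2}+\big(\sum_j\|(1-P_n)K_j\|_{\cl C_2}^2\big)^{1/2}\Big].$$
The first factor is bounded uniformly in $n$ by $2$-semidiagonality. In the bracket, $\sum_j\|K_j(1-P_n)\|_{\cl C_2}^2\le\|Y\|^2\big(\sum_j\|B_j\|^2\big)\|Z^*(1-P_n)\|_{\cl C_2}^2\to0$ directly, while $\sum_j\|(1-P_n)K_j\|_{\cl C_2}^2\to0$ by dominated convergence, each summand tending to $0$ and being dominated by $\|Y\|^2\|Z\|_{\cl C_2}^2\|B_j\|^2$ with $\sum_j\|B_j\|^2<\infty$. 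Hence $\tr(Z^*D_n(Y))\to0$, all AII conditions hold, and Theorem \ref{point} together with the reflexivity of $\cl C_2$ yields the claim.
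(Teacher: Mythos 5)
Your proof is correct and follows the paper's strategy exactly: equip $\cl C_2$ and $B(\cl H)$ with the weak and $\sigma$-weak topologies, produce an approximate inverse intertwining for the triple $(\Delta_{\cl C_2},\Delta,\Psi_2)$, and invoke Theorem \ref{point} together with the reflexivity of $\cl C_2$. The only difference is that the paper outsources the existence of the AII to \cite{reine}, whereas you construct it explicitly via the one-sided compressions $F_n(Y)=P_nY$ (correctly matched to the one-sided hypothesis on $\mathbb A$), and your verification of the three AII conditions, including the commutator--trace estimate, is sound.
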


\begin{proof} It was established in \cite{reine} that under these assumptions there exists an approximate inverse intertwining for the triple $(\Delta_{\cl C_2},\Delta,\Psi_2)$, with respect to the usual weak topology in $\cl C_2$ and $*$-weak topology $\sigma(B(\cl H),{\cl C_1}(\cl H))$. It remains to apply Theorem \ref{point} taking into account that $\cl C_2$ is reflexive.
\end{proof}

The conditions that imply 2-semidiagonality of  $\mathbb A$ were studied in \cite{reine}. We mention only two of them:

1) the matrices of all $A_j$, with respect to some basis in $\cl H$, are supported by a finite number of diagonals;

2) $\mathbb A$ is a family of commuting normal operators  such that $\mathbb A$ has  finite muliplicity and
$\text{ess-dim }{\mathbb A}\leq 2$, the latter is the essential Hausdorff dimension of the joint spectrum of $\mathbb A$, defined in \cite{reine}.
In particular, if all $A_j$ are Lipschitz functions of a Hermitian operator then $\text{ess-dim }{\mathbb A}\leq 2$.

\section{ L\"uders operators}

The {\it formally adjoint} operator $\widetilde{\Delta}$  of $\Delta$ is defined by the formula $$\widetilde{\Delta}(X)=\sum_{j\in J}A_j^*XB_j^*.$$ The restrictions $\Delta_{\cl C_2}$, $\widetilde{\Delta}_{\cl C_2}$ of operators $\Delta$ and $\widetilde{\Delta}$ to the ideal $\cl C_2$ are adjoint to each other as operators on the Hilbert space $\cl C_2$. We say that $\Delta$ is {\it formally selfadjoint} if $\widetilde{\Delta} = \Delta$, and {\it formally normal} if $\widetilde{\Delta}\Delta = \Delta\widetilde{\Delta}$. Clearly, $\Delta$ is formally normal (resp. formally selfadjoint) if and only if $\Delta_{\cl C_2}$ is normal (resp.  selfadjoint).

One has many ways to distinct "positive" operators among formally selfadjoint ones. Let us say that $\Delta$ is $\cl C_2${\it -positive} if $\Delta_{\cl C_2} \ge 0$ as an operator on the Hilbert space $\cl C_2$. Furthermore, $\Delta$ is {\it formally positive} if $\Delta = \widetilde{\Lambda}\Lambda$ for some multiplication operator $\Lambda$. Clearly, any formally positive operator is $\cl C_2$-positive.  Another important subclass of the class of $\cl C_2$-positive operators consists of operators with positive coefficients ($A_j\ge 0$ and $B_j\ge 0$ for all $j\in J$). If $J$ is finite and $A_j = B_j$ for all $j\in J$ then such operators are called {\it L\"uders operators}.

We are interested in relations between the spectra of the operators $\Delta$ and $\Delta_{\cl C_2}$. In particular, under which conditions the spectra of a formally selfadjoint operators is real, and the spectra of a $\cl C_2$-positive operator is non-negative? The fact that this is not always true was established by B. Magajna \cite{magajna}, who constructed examples of L\"uders operators with non-real spectra.

A similar construction can be used to show that formally positive operators can have non-real eigenvalues. In fact, given $\lambda\in\mathbb C$, by \cite[Corollary 2.5]{magajna} there exist positive operators $A_j$, $B_j$, $j=1,2,3$, on a Hilbert space $\cl H$, such that
$\lambda I=\sum_{j=1}^3 A_jB_j$. By letting $\Lambda(X)=\sum_{j=1}^3A_jXB_j$ we obtain $\tilde\Lambda\Lambda(I)=\lambda^2I$.

Note that if the spectrum of a formally selfadjoint operator $\Delta$ is not positive then the same is true for its restriction $\Delta_{{\cl C}_1}$ to the ideal $\cl C_1$ (the trace class), because $\Delta_{{\cl C}_1}$ is the predual of $\Delta$ with respect to the duality $B(\cl H) = \cl C_1^*$. In its turn the restriction of $\Delta$ to the ideal $\cl K(\cl H)$ of all compact operators is the predual of $\Delta_{{\cl C}_1}$, so the spectrum of $\Delta_{{\cl K}(\cl H)}$ also needs not be positive or even real. On the other hand, as $\cl C_1\subset \cl C_2$, all eigenvalues of $\Delta_{{\cl C}_1}$ are positive whenever $\Delta$ is $\cl C_2$ positive.
\begin{problem}\label{restr} Let $\Delta$ be a L\"uders operator.

a) Can $\Delta_{{\cl K}(\cl H)}$ have non-positive eigenvalues?

b) Is it true that $\sigma(\Delta_{{\cl C}_p}) \subset \mathbb{R_+}$, for each $p\in (1,\infty)$?
\end{problem}

In search of conditions that provide the positivity of the spectrum for a L\"uders operator $\Delta: X\mapsto \sum_{j=1}^nA_jXB_j$, B. Magajna considered the case when the left coefficients $A_1,...,A_n$ are commuting. He proved that this condition of "one-sided commutativity" is sufficient if $n=2$, and asked if the same is true for all $n$. Now we will show that the answer to this question is affirmative even for operators of infinite length.

\begin{theorem}\label{tens} Let $W = A\widehat{\otimes}B$, where $A = C(K)$, ($K$ is compact), $B$ is a unital Banach algebra and $\hat\otimes$ denotes the projective tensor product. Then for each
 $w =  \sum_i f_i\otimes b_i\in W$ one has
$$\sigma(w) = \cup_{t\in K}\sigma(\sum_i f_i(t)b_i).$$
\end{theorem}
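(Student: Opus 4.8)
The plan is to prove the two inclusions separately; the inclusion $\cup_{t\in K}\sigma(\sum_i f_i(t)b_i)\subseteq\sigma(w)$ is routine, so the real content is the reverse one. Write $W=C(K)\widehat{\otimes}B$ with unit $1\otimes 1$, and observe that $C(K)\otimes 1$ is a \emph{central} subalgebra. For each $t\in K$ the evaluation $\pi_t:W\to B$, $\pi_t(\sum_i f_i\otimes b_i)=\sum_i f_i(t)b_i$, is a unital homomorphism that is contractive for the projective norm, since $\no{\sum_i f_i(t)b_i}\le\sum_i\no{f_i}\no{b_i}$; hence it extends to all of $W$. If $w-\lambda$ is invertible in $W$, then $\pi_t(w)-\lambda=\pi_t(w-\lambda)$ is invertible in $B$ for every $t$, which gives the easy inclusion after taking the union over $t$. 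It therefore remains to show: if $\pi_t(w)$ is invertible in $B$ for every $t\in K$ (the case $0\notin\cup_t\sigma(\pi_t(w))$, the general $\lambda$ following by translation), then $w$ is invertible in $W$.

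For this I would first establish \emph{local} invertibility. Fix $s\in K$, put $d=\pi_s(w)^{-1}$ and $v=1\otimes d$. Writing $w=\sum_i f_i\otimes b_i$ with $\sum_i\no{f_i}\no{b_i}<\infty$ and using $\sum_i f_i(s)b_id=\pi_s(w)d=1$, one gets the identity
$$ wv-1=\sum_i\big(f_i-f_i(s)\big)\otimes(b_id), $$
in which every coefficient function $f_i-f_i(s)$ vanishes at $s$. For a closed neighbourhood $F\ni s$ set $I_F=\{f\in C(K):f|_F=0\}$, let $J_F=\overline{(I_F\otimes1)W}$ and write $q_F:W\to W_F:=W/J_F$ for the quotient. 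By Tietze's theorem each $f_i-f_i(s)$ can be replaced, without changing its class modulo $J_F$, by a function of the same supremum norm over $F$, so that
$$ \no{q_F(wv-1)}\le\no{d}\sum_i\sup_{t\in F}|f_i(t)-f_i(s)|\,\no{b_i}. $$
Splitting this series into a finite head (controlled by continuity of finitely many $f_i$ at $s$) and a tail (dominated by $2\no{d}\sum_{i>N}\no{f_i}\no{b_i}$) shows the bound tends to $0$ as $F$ shrinks to $\{s\}$. Hence for a small enough closed neighbourhood $F_s$ one has $\no{q_{F_s}(1-wv)}<1$, so a Neumann series in $W_{F_s}$ makes $q_{F_s}(w)$ right invertible; the symmetric computation with $v$ on the left gives left invertibility, so $q_{F_s}(w)$ is invertible in $W_{F_s}$. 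I would then fix a representative $r_s\in W$ of its inverse, so that $r_sw-1$ and $wr_s-1$ both lie in $J_{F_s}$.

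To globalize I would use compactness and a partition of unity. Choose $s_1,\dots,s_m$ with $K=\cup_k U_k$, $U_k=\mathrm{int}(F_{s_k})$, and $\phi_1,\dots,\phi_m\in C(K)$ with $\phi_k\ge0$, $\sum_k\phi_k=1$, $\supp\phi_k\subseteq U_k\subseteq F_{s_k}$. The decisive observation is that multiplication by $\phi_k$ annihilates $J_{F_{s_k}}$: for $f\in I_{F_{s_k}}$ the product $\phi_kf$ vanishes on $F_{s_k}$ (where $f=0$) and off $F_{s_k}$ (where $\phi_k=0$), so $\phi_kf=0$, and by continuity $\phi_k\cdot J_{F_{s_k}}=\{0\}$. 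Writing $r_k=r_{s_k}$ and $r=\sum_k\phi_k r_k$, centrality of $C(K)\otimes1$ and $\sum_k\phi_k=1$ give
$$ rw-1=\sum_k\phi_k(r_kw-1)=0,\qquad wr-1=\sum_k\phi_k(wr_k-1)=0, $$
since each summand vanishes \emph{exactly}. Thus $w$ is invertible in $W$, and applying this to $w-\lambda$ for each $\lambda$ yields the theorem.

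The step I expect to be the main obstacle is exactly this local-to-global passage, because the norms conspire against the naive approach. If one instead formed the approximate inverse $V=\sum_k\phi_k\otimes\pi_{s_k}(w)^{-1}$ and tried to bound $\no{Vw-1}$, one would only control the image of $Vw-1$ in the injective completion $C(K,B)$ of continuous $B$-valued functions, not its projective norm in $W$; moreover the triangle inequality over the $m$ cover sets introduces a factor that cannot be absorbed, and $W$ carries a strictly stronger norm than $C(K,B)$, so smallness there does not transfer. The resolution above avoids approximation in the patching altogether: one works with \emph{genuine} inverses $r_k$ in the fibre algebras $W_{F_{s_k}}$ and exploits the exact annihilation $\phi_k\cdot J_{F_{s_k}}=\{0\}$. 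Consequently the real work is concentrated in the local step, where the quotient norm and the vanishing of $f_i-f_i(s)$ at $s$ cooperate to force a Neumann series to converge inside the fibre.
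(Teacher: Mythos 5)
Your proof is correct, but it takes a genuinely different route from the paper's. The paper invokes Zemanek's theorem, $\sigma(w)=\bigcup_{I\in\mathrm{Prim}(W)}\sigma(q_I(w))$, and then shows that in every irreducible representation of $W$ the central subalgebra $C(K)\otimes 1$ must act by scalars (a nontrivial pair of central zero-divisors would produce an invariant subspace), so that every primitive quotient factors as an evaluation $\pi_t$ followed by an irreducible representation of $B$; the hard inclusion then drops out immediately. You instead prove the hard inclusion by constructing the inverse directly: a Neumann series in the fibre algebra $W/\overline{(I_F\otimes 1)W}$, made to converge because the coefficients of $wv-1$ vanish at $s$ and can be Tietze-truncated to $F$ (with the usual head/tail split of the absolutely convergent series), followed by a partition-of-unity patching in which the exact identity $(\phi_k\otimes 1)J_{F_{s_k}}=\{0\}$, rather than any norm estimate, kills the error terms. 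Both arguments are sound. The paper's is shorter but leans on Zemanek's spectral characterization and the structure theory of irreducible representations of Banach algebras; yours is longer but entirely elementary and self-contained (Neumann series, Tietze extension, partitions of unity, centrality of $C(K)\otimes 1$) and is constructive, exhibiting the inverse $r=\sum_k(\phi_k\otimes 1)r_k$ explicitly. Your closing remarks also correctly diagnose why the naive approximate inverse $\sum_k\phi_k\otimes\pi_{s_k}(w)^{-1}$ cannot be controlled in the projective norm and why exact annihilation is the right substitute.
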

\begin{proof} Let $\text{Prim}(W)$ be the set of all primitive ideals of $W$. For each $I\in \text{Prim}(W)$, let $q_I$ be the quotient map $W \to W/I$. By theorem of Zemanek \cite{zem}, $$\sigma(w) = \cup_{I\in \text{Prim}(W)}\sigma(q_I(w)).$$
If $\pi_I$ is an irreducible representation of $W$ with kernel $I$ then $\pi_I(A\otimes 1)$ is in the center of $\pi_I(W)$. Since $\pi_I(A\otimes 1)$ is isomorphic to $A/J$ where $J = \{f\in A: f\otimes 1 \in I\}$, $A/J$ is either one-dimensional or there are non-zero operators $T_1,T_2\in \pi_I(A\otimes 1)$ with $T_1T_2 = 0$. Setting $Y = \ker T_1$ we see that $Y$ is a non-trivial subspace invariant for $\pi_I(W)$. So $A/J$ is one-dimensional, $J = \{f\in A: f(t) = 0\}$, for some $t\in K$.

 It follows that $\pi_I(f\otimes 1) = f(t)1$ and $\pi_I(f\otimes b) = f(t)\tau(b)$, for all $f\in C(K)$, $b\in B$ , where $\tau$ is an irreducible representation  of $B$. Therefore $\pi_I(w) = \sum_if_i(t)\tau(b_i) = \tau(\sum_if_i(t)b)$ whence
$$\sigma(q_I(w)) = \sigma(\pi_I(w)) = \sigma(\tau(\sum_if_i(t)b_i)) \subset \sigma(\sum_if_i(t)b_i).$$
 It follows that
$$\sigma(w) \subset \cup_{t\in K}\sigma(\sum_i f_i(t)b_i).$$
The converse inclusion is evident.
\end{proof}

\begin{corollary}\label{luders} Let $\Delta(X) = \sum_{i=1}^{\infty}A_iXB_i$, with $A_i\ge 0$, $B_i\ge 0$ such that $\sum_{i=1}^\infty\|A_i\|^2<\infty$, $\sum_{i=1}^\infty\|B_i\|^2<\infty$. If $A_iA_j=A_jA_i$ for all $i,j$, then $\sigma(\Delta)\subset \mathbb{R}_+$.
\end{corollary}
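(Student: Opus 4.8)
The plan is to exhibit $\Delta$ as the image, under a unital homomorphism, of an element of a projective tensor product algebra of exactly the type treated in Theorem~\ref{tens}, and then to read off its spectrum from the pointwise fibres.

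First I would fix notation for the two commuting families of operators on $B(\cl H)$ out of which $\Delta$ is built: for $a,b\in B(\cl H)$ let $L_a,R_b\in B(B(\cl H))$ be left and right multiplication, $L_a(X)=aX$, $R_b(X)=Xb$. These satisfy $\|L_a\|=\|a\|$, $\|R_b\|=\|b\|$, $L_aR_b=R_bL_a$ and $R_bR_{b'}=R_{b'b}$. Let $\cl A$ be the unital C*-subalgebra of $B(\cl H)$ generated by the commuting positive operators $\{A_i\}$ and $1$, so that $\cl A\cong C(K)$ with $K$ its maximal ideal space; since $A_i\ge 0$, the function $f_i\in C(K)$ representing $A_i$ satisfies $f_i(t)\ge 0$ for all $t\in K$. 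Let $\cl B$ be the unital C*-subalgebra generated by $\{B_i\}$ and $1$ (a $*$-algebra, since each $B_i$ is selfadjoint), and let $\cl B^{\mathrm{op}}$ denote its opposite algebra. Put $W=C(K)\,\widehat{\otimes}\,\cl B^{\mathrm{op}}$.

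Next I would produce a unital homomorphism $\mu\colon W\to B(B(\cl H))$. The bilinear map $(f,b)\mapsto L_{a_f}R_b$, where $a_f\in\cl A$ corresponds to $f$, is bounded, hence extends to a bounded linear $\mu$ on the projective tensor product; a short computation on elementary tensors, using $R_bR_{b'}=R_{b'b}$ together with $L_aR_b=R_bL_a$, shows that $\mu$ is multiplicative, and $\mu(1\otimes 1)=\mathrm{id}$. By Cauchy--Schwarz $\sum_i\|A_i\|\,\|B_i\|<\infty$, so $w:=\sum_i f_i\otimes B_i$ converges in the projective norm of $W$ and $\mu(w)=\sum_i L_{A_i}R_{B_i}=\Delta$. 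Since $\mu$ is a unital homomorphism, $\sigma(\Delta)=\sigma_{B(B(\cl H))}(\mu(w))\subseteq\sigma_W(w)$. It then remains to apply Theorem~\ref{tens}, which gives
$$\sigma_W(w)=\bigcup_{t\in K}\sigma_{\cl B^{\mathrm{op}}}\Big(\sum_i f_i(t)B_i\Big).$$
For each $t$ the element $C_t:=\sum_i f_i(t)B_i$ is a norm-convergent combination of the positive operators $B_i$ with nonnegative coefficients $f_i(t)$, hence $C_t\ge 0$; its spectrum is unchanged in the opposite algebra, and by spectral permanence for unital C*-subalgebras $\sigma_{\cl B}(C_t)=\sigma(C_t)\subset\mathbb{R}_+$. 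Thus $\sigma_W(w)\subset\mathbb{R}_+$, and therefore $\sigma(\Delta)\subset\mathbb{R}_+$.

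The routine parts are the two convergence estimates and the verification that $\mu$ is multiplicative. The point that needs care, and where the hypotheses are genuinely used, is the bookkeeping that makes Theorem~\ref{tens} applicable: passing to $\cl B^{\mathrm{op}}$ so that right multiplication becomes a homomorphism and $\mu$ is an algebra map, and invoking spectral permanence to replace the a priori larger subalgebra spectrum $\sigma_{\cl B}(C_t)$ by the honest operator spectrum $\sigma(C_t)$, which is nonnegative precisely because the $B_i$ are positive while commutativity of the $\{A_i\}$ is exactly what allows the passage to $C(K)$, so that the fibre coefficients $f_i(t)$ are genuine nonnegative scalars.
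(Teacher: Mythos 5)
Your proof is correct and follows essentially the same route as the paper: identify the unital C*-algebra generated by the commuting $A_i$ with $C(K)$, realize $\Delta$ as the image of $w=\sum_i f_i\otimes B_i$ under a unital homomorphism of a projective tensor product, and apply Theorem~\ref{tens} together with positivity of the fibre elements $\sum_i f_i(t)B_i$. Your explicit passage to the opposite algebra and the appeal to spectral permanence only make precise details that the paper's map $\pi=\phi\otimes\id$ leaves implicit.
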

\begin{proof} Let $A$ be the unital C*-algebra generated by all $A_i$. Then there is an isometric isomorphism $\phi$ of $A$ onto $C(K)$, where $K$ is a compact. Let $W = C(K)\widehat{\otimes}B(\cl H)$ and $\pi = \phi\otimes \id$ be the representation of $W$ on $B(\cl H)$ that sends $f\otimes T$ to the operator $X\mapsto \phi(f)XT$. Then $\Delta = \pi(w)$, where $w =  \sum_{i=1}^\infty f_i\otimes B_i$, $f_i = \phi^{-1}(A_i)$. It follows that $\sigma(\Delta)\subset \sigma(w) = \cup_{t\in K}\sigma(\sum_i f_i(t)B_i)$. Since $f_i(t)\ge 0$ for all $i$, the operators $ f_i(t)B_i$ are positive for all $i$ and $\sigma(\sum_{i=1}^\infty f_i(t)B_i)\subset \mathbb{R}_+$. Hence $\sigma(\Delta)\subset \mathbb{R}_+$.
\end{proof}

The most well studied class of formally normal operators are operators with commutative normal coefficients. If $J$ is finite their spectra are described in even more general situation \cite{curto_fialkow}. The answer for infinite $J$ is the same.

\begin{proposition}\label{ComNor}
Let $\mathbb A$ and $\mathbb B$ be commutative families of normal operators satisfying  (\ref{coef}). Then $$\sigma(\Delta) = \{\mathbb{\lambda}\cdot\mathbb{\mu}: \mathbb{\lambda}\in \sigma(\mathbb{A}), \mathbb{\mu}\in\sigma(\mathbb{B})\}=\sigma(\Delta_{\cl C_2}).$$
\end{proposition}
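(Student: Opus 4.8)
The plan is to prove both equalities at once by closing a cycle of inclusions. Write $\Sigma := \{\lambda\cdot\mu : \lambda\in\sigma(\mathbb A),\ \mu\in\sigma(\mathbb B)\}$ for the candidate set. I will establish
$$\sigma(\Delta)\subset\Sigma,\qquad \sigma(\Delta_{\cl C_2})\subset\sigma(\Delta),\qquad \sigma(\Delta_{\cl C_2})=\Sigma,$$
which combine as $\Sigma=\sigma(\Delta_{\cl C_2})\subset\sigma(\Delta)\subset\Sigma$ and force equality throughout. Throughout, let $\mathcal A=C^*(\mathbb A)$ and $\mathcal B=C^*(\mathbb B)$; since the $A_j$ are commuting normal operators, the Fuglede--Putnam theorem makes $\mathcal A$ commutative, and likewise $\mathcal B$, so $\mathcal A\cong C(K_A)$ and $\mathcal B\cong C(K_B)$ with $K_A,K_B$ compact. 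Under these identifications $A_j,B_j$ become continuous functions $\hat A_j\in C(K_A)$, $\hat B_j\in C(K_B)$, and $\sigma(\mathbb A)=\{(\hat A_j(s))_j:s\in K_A\}$, $\sigma(\mathbb B)=\{(\hat B_j(u))_j:u\in K_B\}$. Condition (\ref{coef}) gives $\sum_j|\hat A_j(s)|^2\le\sum_j\|A_j\|^2<\infty$ and similarly for $\hat B_j$, so the scalar products $\lambda\cdot\mu=\sum_j\lambda_j\mu_j$ converge.

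The work on $\cl C_2$ handles the last two items. Under the standard identification $\cl C_2\cong\cl H\otimes\overline{\cl H}$ (with $\xi\eta^*\leftrightarrow\xi\otimes\overline\eta$), the map $X\mapsto AXB$ becomes $A\otimes\overline{B^*}$, so $\Delta_{\cl C_2}$ corresponds to the norm-convergent sum $\sum_j A_j\otimes\overline{B_j^*}$ of commuting normal operators; in particular $\Delta_{\cl C_2}$ is normal. As $\Psi_2:\cl C_2\to B(\cl H)$ is bounded and injective with $\Psi_2\Delta_{\cl C_2}=\Delta\Psi_2$, Corollary \ref{Int-normal}(ii) yields $\sigma(\Delta_{\cl C_2})\subset\sigma(\Delta)$. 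To compute the spectrum, observe that $\{A_j\otimes1\}\cup\{1\otimes\overline{B_j^*}\}$ generates a commutative C*-algebra isomorphic to $C(K_A\times K_B)$: the map $b\mapsto\overline{b^*}$ is a $*$-isomorphism of $\mathcal B$ onto $C^*(\{\overline{B_j^*}\})$ sending $B_j$ to $\overline{B_j^*}$, so its characters reproduce those of $\mathcal B$ \emph{without} conjugation. Evaluating the Gelfand transform of $\Delta_{\cl C_2}$ at the character indexed by $(s,u)$ then gives $\sum_j\hat A_j(s)\hat B_j(u)$, whence by spectral permanence $\sigma(\Delta_{\cl C_2})=\Sigma$.

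For the first, and main, inclusion I realise $\Delta$ as the image of a single element of a commutative Banach algebra and invoke Theorem \ref{tens}. Put $W=C(K_A)\,\widehat{\otimes}\,C(K_B)$, a commutative unital Banach algebra, and let $\Theta:W\to B(B(\cl H))$ be the bounded unital homomorphism determined on elementary tensors by $\Theta(f\otimes g)=(X\mapsto \widetilde f\,X\,\widetilde g)$, where $\widetilde f\in\mathcal A$, $\widetilde g\in\mathcal B$ correspond to $f,g$; multiplicativity of $\Theta$ uses exactly the commutativity of $\mathcal B$ (so that left- and right-multiplications compose correctly), and $\|\Theta(f\otimes g)\|\le\|f\|_\infty\|g\|_\infty$ gives $\|\Theta\|\le1$. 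Since $\sum_j\|\hat A_j\|_\infty\|\hat B_j\|_\infty\le(\sum_j\|A_j\|^2)^{1/2}(\sum_j\|B_j\|^2)^{1/2}<\infty$, the element $w:=\sum_j\hat A_j\otimes\hat B_j$ lies in $W$ and $\Theta(w)=\Delta$. Theorem \ref{tens} now computes
$$\sigma_W(w)=\bigcup_{s\in K_A}\sigma\Big(\sum_j\hat A_j(s)\hat B_j\Big)=\bigcup_{s\in K_A}\Big\{\,\sum_j\hat A_j(s)\hat B_j(u):u\in K_B\,\Big\}=\Sigma,$$
the middle equality because the spectrum of an element of $C(K_B)$ is its range. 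As a unital homomorphism $\Theta$ cannot enlarge spectra, so $\sigma(\Delta)=\sigma(\Theta(w))\subset\sigma_W(w)=\Sigma$, closing the cycle.

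The main obstacle is precisely this last inclusion. A priori the spectrum of $\Delta$ on all of $B(\cl H)$ can be strictly larger than on $\cl C_2$ --- indeed the paper's Wiener--Pitt example exhibits this for normal coefficients satisfying the Haagerup condition --- so the real content is that here the symbol $\sum_j\hat A_j(s)\hat B_j(u)$ factors through the projective tensor product with $\ell^2$-control of the coefficients, which is exactly what makes the representation $\Theta$ available and Zemanek's theorem applicable. The subsidiary technical points demanding care are the bookkeeping of conjugations in the $\cl C_2\cong\cl H\otimes\overline{\cl H}$ computation (in particular that $b\mapsto\overline{b^*}$ preserves, rather than conjugates, the joint spectrum) and the verification that the infinite sum $w$ genuinely lies in the projective tensor product, so that Theorem \ref{tens} applies verbatim.
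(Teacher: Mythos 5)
Your argument is correct, and at the top level it implements the same sandwich as the paper: bound $\sigma(\Delta)$ from above by the spectrum of a single element of a projective tensor product of two commutative C*-algebras, and from below by $\sigma(\Delta_{\cl C_2})$, transferred via the intertwining of $\Delta$ with the normal operator $\Delta_{\cl C_2}$ through the bounded injection $\Psi_2$. The difference lies in which lemmas carry the load. The paper realizes $F(\lambda,\mu)=\lambda\cdot\mu$ in the Varopoulos algebra $V(\sigma(\mathbb A),\sigma(\mathbb B))$, defines the two representations $\tau$ (on $B(\cl H)$) and $\pi$ (on $\cl C_2$), and concludes in one stroke from Corollary \ref{repr}; that corollary silently requires the regularity of $V(X,Y)$ and the injectivity of $\pi$ (neither is verified explicitly in the paper's proof), and it reuses Corollary \ref{Int-normal} internally. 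You instead (a) obtain the upper bound $\sigma(\Delta)\subset\{\lambda\cdot\mu\}$ from Theorem \ref{tens} applied to $C(K_A)\,\widehat\otimes\,C(K_B)$ --- for commutative second factor this amounts to the same fact the paper uses, namely that the Gelfand spectrum of the projective tensor product is $K_A\times K_B$ --- and (b) replace the regularity-plus-injectivity argument by a direct Gelfand computation of $\sigma(\Delta_{\cl C_2})$ on $\cl H\otimes\overline{\cl H}$, with the conjugation bookkeeping handled correctly: $X\mapsto AXB$ becomes $A\otimes\overline{B^*}$, and $b\mapsto\overline{b^*}$ is a genuine $*$-isomorphism, so no complex conjugate enters the symbol and the joint spectrum of $\mathbb B$ is reproduced verbatim. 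Your route is somewhat more self-contained (no appeal to the regularity of the Varopoulos algebra, no injectivity check for $\pi$, and the normality of $\Delta_{\cl C_2}$ plus Corollary \ref{Int-normal}(ii) is exactly the observation the paper itself makes after its Wiener--Pitt example), at the cost of the explicit tensor-product identification of $\cl C_2$; the paper's route is shorter and exhibits the proposition as an instance of the general principle packaged in Corollary \ref{repr}.
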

\begin{proof}
The subsets $X = \sigma(\mathbb{A})$,  $Y = \sigma(\mathbb{B})$ of $\ell^2(J)$ are compact in the weak topology of $\ell^2(J)$, the functions $\tau_j(\mathbb{\lambda}) := \lambda_j$, $\eta_j(\mathbb{\mu}): = \mu_j$, $j\in J$,  are continuous so that the function $$F(\mathbb{\lambda},\mathbb{\mu}): = \mathbb{\lambda}\cdot\mathbb{\mu} = \sum_j \tau_j(\mathbb{\lambda})\eta_j(\mathbb{\mu})$$ belongs to the Varopoulos algebra $V(X,Y)=C(X)\hat\otimes C(Y)$.

Define now representations $\tau: V(X,Y)\to B(B(\mathcal H))$ and $\pi:V(X,Y)\to B(\cl C_2(\mathcal H))$ by letting for $\Phi(x,y)=\sum_{i=1}^\infty f_i(x)g_i(y)$
$$\tau(\Phi)(T)=\sum_{i=1}^\infty f_i(\mathbb A)Tg_i(\mathbb B), \ T\in B(\cl H),$$ and $\pi(\Phi)=\tau(\Phi)|_{\cl C_2(\mathcal H)}$.
Let $\Psi_2: \cl C_2(\mathcal H)\to B(\mathcal H)$ be the inclusion map. Then $\tau(\Phi)\Psi_2=\Psi_2\pi(\Phi)$. By Corollary \ref{repr}, $\sigma(\Delta)=\sigma(F)=\sigma(\Delta_{\cl C_2})$.
\end{proof}

\medskip

  In the next section we will show that the statement of Proposition~\ref{ComNor} is false for more general class of multiplication operators with commuting normal coefficients.

\smallskip

\section{ Mutiplication operators with the Haagerup condition}

It is known  that  a multiplication operator (\ref{multop}) is well  defined and bounded on $B(\cl H)$ if its coefficients satisfy a more general condition:
\begin{equation}\label{Haag}
\|\sum_{j\in J}A_jA_j^*|| < \infty, \|\sum_{j\in J}B_j^*B_j||  <\infty,
\end{equation}
where the convergence of the series and the expression for the multiplication operators is in the weak* topology.  In this generality it is possible that $\Delta$ does not preserve  the Schatten-von Neumann ideals $\cl C_p$, but if $A_j$ and $B_j$ are normal then this is true. Indeed in this case the operator $\widetilde{\Delta}$ is also bounded on $B(\cl H)$ whence by duality $\Delta$ preserve $\cl C_1$ and the invariance of all $\cl C_p$  can be easily seen by using a complex interpolation argument.




\medskip
Let now the coefficient families  ${\mathbb A}$ and ${\mathbb B}$ be commutative and consist of normal operators.
Realizing them as families of multiplication operators on $H_1=L_2(Y,\nu)$ and $H_2=L_2(X,\mu)$ respectively, i.e.
$$A_jg(y)=g_j(y)g(y),\quad B_jf(x)=f_j(x)f(x),$$
we obtain
\begin{equation}\label{coef2}
\text{ess sup}_{x\in X}\sum_{j\in J}|f_j(x)|^2<\infty\quad \text{ess sup}_{y\in Y}\sum_{j\in J} |g_j(y)|^2<\infty.
\end{equation}
Hence $F=\sum_{j\in J} f_j\otimes g_j$ is an element of the weak$^*$ Haagerup tensor product
$$V^\infty(X,Y):=L_\infty(X,\mu)\otimes_{w*h}L_\infty(Y,\nu)$$ (see \cite{blecher-smith} for the definition of this tensor product).
It is known that elements $\varphi= \sum_{i=1}^\infty a_i\otimes b_i\in V^\infty(X,Y)$ can be identified with (marginally equivalence classes of) functions $F:X\times Y\to\mathbb C$,
$$\varphi(x,y)=\sum_{i=1}^\infty a_i(x)b_i(y).$$
Recall that a subset $E\subset X\times Y$ is called marginally null (with respect to $\mu\times \nu$) if
$E\subset (X_1\times Y)\cup (X\times Y_1)$ and $\mu(X_1)=\nu(Y_1)=0$.

Set $\Gamma(X,Y)=L_2(X,\mu)\hat\otimes L_2(Y,\nu)$. We can also identify $\Gamma(X,Y)$ with the space of all (marginally equivalence classes of) functions  $h:X\times Y\to\mathbb C$ which admit representation
$$h(x,y)=\sum_{i=1}^\infty u_i(x)v_i(y),$$
where $u_i\in L^2(X,\mu)$, $v_i\in L_2(Y,\nu)$, such that $\sum_{i=1}^\infty||u_i||_2^2<\infty$,  and $\sum_{i=1}^\infty||v_i||_2^2<\infty$. We write $\|h\|_{\Gamma}$ for the projective norm of $h\in \Gamma(X,Y)$. It is known that $B(L_2(X,\mu), L_2(Y,\nu))$ is dual to $\Gamma(X,Y)$ and the duality is given by
$$\langle X,f\otimes g\rangle=\langle Xf,\bar g\rangle, $$
for $ X\in B(H_1,H_2)$, $f\in L_2(X,\mu)$, $g\in L_2(Y,\nu)$.
One has
$$V^\infty(X,Y)=\{\varphi\in L^\infty(X\times Y): \varphi h\in^{\mu\times\nu}\Gamma(X,Y) \ \forall h\in\Gamma(X,Y)\},$$
here $\varphi h\in^{\mu\times\nu}\Gamma(X,Y)$ means that $\varphi h$ differs from a function in $\Gamma(X,Y)$ on a $\mu\times\nu$-null set.

Let $A(\mathbb R)=\cl FL_1(\mathbb R)$ be the Fourier algebra of $\mathbb R$, where $\cl F$ is the Fourier transform. Then the map
$P:\Gamma(\mathbb R,\mathbb R)\to A(\mathbb R)$
(with the Lebesque measure on $\mathbb R$) given by
\begin{equation}\label{pmap}
P(f\otimes g)(t)=g\ast\check f(t),
\end{equation}
 where $\check f(t)=f(-t)$, is a contractive surjection.

If $B(\mathbb R)$ is  the Fourier-Stiltjes algebra of the group ${\mathbb R}$ then,  for each $h\in  B(\mathbb R)$,  the function $h(x-y)$ belongs to $ V^\infty({\mathbb R},{\mathbb R})$ (with respect to the Lebesgue measure). Recall that $B(\mathbb R)$ is isomorphic to the measure algebra $M(\mathbb R)$ via the Fourier-Stiltjes transform $\hat\mu(t)=\int_{\mathbb R}e^{-itx}d\mu(x)$, $\mu\in M(\mathbb R)$.

The classical fact of harmonic analysis, referred to as the Wiener-Pitt phenomen, is the existence of a function $g\in B({\mathbb R})$ such that $|g(x)|\geq 1$, $x\in\mathbb R$, and $1/g\not\in B(\mathbb R)$ (see e.g. \cite[chapter 4]{laursen-neumann}).

Let $F(x,y)=g(y-x)$.  Then by the above
$$F(x,y)=\sum_{j=1}^\infty f_j(x)g_j(y)$$ with $f_j, g_j\in L^\infty(\mathbb R)$ satisfying (\ref{coef2}).
Let $A_j$, $B_j$ be the multiplication operators by $g_j$ and $f_j$ respectively and $\Delta(X)=\sum_{j=1}^\infty A_jXB_j$.

\begin{proposition}
 $\sigma(\Delta)\ne\{g(x): x\in{\mathbb R}\}=\sigma(\Delta_{C_2})$.
\end{proposition}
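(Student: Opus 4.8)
The plan is to show that $0\in\sigma(\Delta)$ while $0\notin\sigma(\Delta_{\cl C_2})$, which already forces $\sigma(\Delta)\neq\sigma(\Delta_{\cl C_2})$. First I would pin down the easy side. Identifying a Hilbert--Schmidt operator with its kernel gives a unitary $\cl C_2\cong L_2(\mathbb R\times\mathbb R)$ under which $\Delta_{\cl C_2}$ becomes multiplication by $F(x,y)=g(y-x)$. Being a multiplication operator, $\Delta_{\cl C_2}$ is normal and its spectrum is the essential range of $F$; since $g$ is continuous and $(x,y)\mapsto y-x$ is an open surjection, this essential range is exactly $\{g(x):x\in\mathbb R\}$, giving the claimed equality $\sigma(\Delta_{\cl C_2})=\{g(x):x\in\mathbb R\}$. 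As $|g|\ge 1$ everywhere, $0$ does not lie in this set, so $\Delta_{\cl C_2}$ is invertible. Applying Corollary~\ref{Int-normal}(ii) to the inclusion $\Psi_2:\cl C_2\to B(\cl H)$ and the normal operator $\Delta_{\cl C_2}$ also shows $\sigma(\Delta_{\cl C_2})\subset\sigma(\Delta)$, so the final conclusion will in fact be that $\sigma(\Delta)$ is strictly larger.

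The heart of the argument is to prove $0\in\sigma(\Delta)$, i.e.\ that $\Delta$ is not invertible on $B(\cl H)$, and here the Wiener--Pitt phenomenon enters. Writing $\mathcal B=B(L_2(X),L_2(Y))$, which is dual to $\Gamma=\Gamma(\mathbb R,\mathbb R)$, the operator $\Delta$ is the Schur multiplier with symbol $F(x,y)=g(y-x)$; it is weak$^*$-continuous and equals $(\Delta_*)^*$, where the predual $\Delta_*$ is again multiplication by $F$, now acting on $\Gamma$. Since an operator is invertible precisely when its adjoint is, it suffices to show that $\Delta_*$ is \emph{not} invertible on $\Gamma$. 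I would relate $\Delta_*$ to the Fourier algebra through the contractive surjection $P:\Gamma\to A(\mathbb R)$ of (\ref{pmap}). A direct computation shows $P(h)(t)=\int_{\mathbb R}h(s-t,s)\,ds$, and because $F(s-t,s)=g(s-(s-t))=g(t)$ is constant along each diagonal, one gets the intertwining
\begin{equation*}
P(\Delta_* h)(t)=\int_{\mathbb R}g(t)\,h(s-t,s)\,ds=g(t)\,P(h)(t),
\end{equation*}
that is $P\circ\Delta_*=M_g\circ P$, where $M_g$ is multiplication by $g$ on $A(\mathbb R)$ (well defined since $g\in B(\mathbb R)$ is a multiplier of $A(\mathbb R)$).

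Now suppose, for contradiction, that $\Delta$ is invertible; then $\Delta_*$ is invertible on $\Gamma$. Given any $u\in A(\mathbb R)$, surjectivity of $P$ provides $h\in\Gamma$ with $P(h)=u$, and then $M_g\bigl(P(\Delta_*^{-1}h)\bigr)=P(h)=u$, so that $u/g=P(\Delta_*^{-1}h)\in A(\mathbb R)$. Thus $1/g$ maps $A(\mathbb R)$ into itself; the resulting everywhere-defined multiplication operator has closed graph, hence is bounded, so $1/g$ is a multiplier of $A(\mathbb R)$, i.e.\ $1/g\in B(\mathbb R)$. This contradicts the defining property of the Wiener--Pitt function $g$. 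Therefore $\Delta$ is not invertible, $0\in\sigma(\Delta)$, and combined with the first paragraph $\sigma(\Delta)\neq\sigma(\Delta_{\cl C_2})$.

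The main obstacle I anticipate is the middle step: making the passage to the predual and establishing the intertwining $P\circ\Delta_*=M_g\circ P$ rigorous, including the measure-theoretic justification that $P$ really carries multiplication by $g(y-x)$ to multiplication by $g$ (the marginal-null subtleties in identifying elements of $\Gamma$ and $V^\infty(\mathbb R,\mathbb R)$ with functions). Once that identity is in place, the extraction of $1/g\in B(\mathbb R)$ and the resulting contradiction with the Wiener--Pitt phenomenon are routine.
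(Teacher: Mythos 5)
Your proof is correct and follows essentially the same route as the paper: reduce to non-invertibility of $\Delta$, pass to the predual action of multiplication by $F(x,y)=g(y-x)$ on $\Gamma(\mathbb R,\mathbb R)$, push forward through the map $P$ of (\ref{pmap}) to conclude $1/g$ would be a multiplier of $A(\mathbb R)$, contradicting the Wiener--Pitt choice of $g$. The only cosmetic difference is that you invoke the standard ``$T$ invertible iff $T^*$ invertible'' duality to invert the predual multiplier directly, where the paper instead uses density of $F\cdot\Gamma(\mathbb R,\mathbb R)$ (via \cite[Corollary 4.3]{sht}) to build the inverse multiplication by $1/F$; both are sound.
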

\begin{proof}
By the choice of the function $g$ we have that $0\not\in\{g(x): x\in{\mathbb R}\}$.  To prove the claim it is enough to show that $\Delta$ is not invertible. Assume to the contrary that $\Delta$ has an inverse $\Delta'$.
Then, given $\Psi\in \Gamma(\mathbb R,\mathbb R)$, we have
$$\langle\Delta\Delta'(X),\Psi\rangle=\langle\Delta'(X),F\Psi\rangle=\langle X,\Psi\rangle,$$
where $\langle\cdot,\cdot\rangle$ is the pairing between $B(L^2(\mathbb R))$ and $\Gamma(\mathbb R,\mathbb R)$.
As $\text{null } F=\emptyset$, we have by \cite[Corollary 4.3]{sht} that the space $R(S_F):=\{F\Psi:\Psi\in\Gamma(\mathbb R,\mathbb R)\}$ is dense in $\Gamma(\mathbb R,\mathbb R)$. Let $S_F:\Gamma({\mathbb R},\mathbb R)\to \Gamma({\mathbb R},\mathbb R)$, $\Psi\mapsto F\Psi$ and $S_{F^{-1}}$ be the  linear operator defined on $R(S_F)$ by $S_{F^{-1}}(F\Psi)=\Psi$.  From the above equality we have
$$\langle\Delta'(X), F\Psi\rangle=\langle X,S_{F^{-1}}(F\Psi)\rangle,$$
$S_{F^{-1}}$ is bounded and $\Delta'=(S_{F^{-1}})^*$.  Write $S_{F^{-1}}$ also for the extension of
$S_{F^{-1}}$ to $\Gamma(\mathbb R,\mathbb R)$. Then $S_{F^{-1}}$ is
the multiplication by $1/F$. 

 Let $P:\Gamma({\mathbb R},{\mathbb
R})\to A({\mathbb R})$ be the surjective contraction given by  (\ref{pmap}), then $P((1/F)\Psi)=(1/g)P(\Psi)$ for any
$\Psi\in\Gamma(\mathbb R,\mathbb R)$. Therefore, $1/g$ is a
multiplier of the Fourier algebra  $A(\mathbb R)$ and hence $1/g\in B(\mathbb R)$ by
\cite[Theorem 3.8.1]{rudin}. A contradiction.
\end{proof}
Observe that we always have $\sigma(\Delta_{\cl C_2})\subset\sigma(\Delta)$ due to  Corollary \ref{Int-normal} (ii).

Similar arguments can be applied to prove the following
\begin{proposition}
The spectrum of a $\cl C_2$-positive operator $\Delta$ with coefficient families, satisfying (\ref{Haag}) and consisting of commuting normal operators, is not necessarily contained in $\mathbb R_+$.
\end{proposition}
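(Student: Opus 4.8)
The plan is to re-run the construction of the previous proposition, but to choose the generating function $g$ so that it is real-valued and non-negative, which forces the restricted operator to be positive, while arranging that the Banach-algebra spectrum of $g$ in $B(\mathbb R)$ leaves $\mathbb R_+$. Concretely, I would again set $F(x,y)=g(y-x)$, take $A_j,B_j$ to be the multiplication operators coming from a decomposition $F=\sum_j f_j\otimes g_j\in V^\infty(\mathbb R,\mathbb R)$, and put $\Delta(X)=\sum_j A_jXB_j$. Since all the coefficients are multiplication operators by bounded functions, they are normal, mutually commuting, and the estimates (\ref{coef2}) are exactly the Haagerup condition (\ref{Haag}). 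If $g\ge 0$ then $F\ge 0$, and since $\Delta_{\cl C_2}$ is unitarily equivalent to multiplication by $F$ on $L_2(\mathbb R\times\mathbb R)\cong\cl C_2$, this gives $\Delta_{\cl C_2}\ge 0$; its spectrum is the essential range of $F$, which by translation invariance of Lebesgue measure equals $\overline{g(\mathbb R)}\subset\mathbb R_+$. Thus $\Delta$ is $\cl C_2$-positive with $\sigma(\Delta_{\cl C_2})\subset\mathbb R_+$.

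The next step is to show that any $\lambda_0\in\sigma_{B(\mathbb R)}(g)$ lies in $\sigma(\Delta)$. If $\lambda_0\in\overline{g(\mathbb R)}$ this is immediate, since $\sigma(\Delta_{\cl C_2})\subset\sigma(\Delta)$ by Corollary~\ref{Int-normal}(ii). If $\lambda_0\notin\overline{g(\mathbb R)}$, then $\Delta-\lambda_0 I$ has symbol $(g-\lambda_0)(y-x)$, which is bounded away from zero, so $\mathrm{null}\,(g-\lambda_0)(y-x)=\emptyset$ and \cite[Corollary 4.3]{sht} still applies. Repeating the $P$-map argument of the previous proposition with $g$ replaced by $g-\lambda_0$ then shows that invertibility of $\Delta-\lambda_0 I$ would force $1/(g-\lambda_0)$ to be a multiplier of $A(\mathbb R)$, hence $1/(g-\lambda_0)\in B(\mathbb R)$ by \cite[Theorem 3.8.1]{rudin}, i.e. $\lambda_0\notin\sigma_{B(\mathbb R)}(g)$. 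Contraposing, $\lambda_0\in\sigma(\Delta)$; in particular $\sigma_{B(\mathbb R)}(g)\subset\sigma(\Delta)$.

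It then remains to exhibit a non-negative real-valued $g\in B(\mathbb R)$ with $\sigma_{B(\mathbb R)}(g)\not\subset\mathbb R_+$. For this I would invoke the non-symmetry of the measure algebra: since $B(\mathbb R)\cong M(\mathbb R)$ is not a symmetric $*$-algebra (a strong form of the Wiener--Pitt phenomenon), there is a self-adjoint element, i.e. a real-valued $g_1=\hat\mu_1\in B(\mathbb R)$, whose spectrum contains a point $\lambda_1$ with $\mathrm{Im}\,\lambda_1\ne 0$. As $g_1$ is bounded below by some real $m$, the function $g:=g_1-m$ is non-negative and again lies in $B(\mathbb R)$ (constants correspond to multiples of $\delta_0$), while $\sigma_{B(\mathbb R)}(g)=\sigma_{B(\mathbb R)}(g_1)-m$ contains $\lambda_1-m\notin\mathbb R\supset\mathbb R_+$. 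With this $g$ the operator $\Delta$ constructed above is $\cl C_2$-positive, yet $\lambda_1-m\in\sigma(\Delta)\setminus\mathbb R_+$, proving the claim.

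The main obstacle is the third paragraph. Everything else is a routine transcription of the previous proof, but producing a \emph{non-negative} symbol whose extra (non-natural) spectrum leaves $\mathbb R_+$ is \emph{not} delivered by the Wiener--Pitt function quoted earlier, which has $|g|\ge 1$ and only yields extra spectrum at the point $0\in\mathbb R_+$. One must instead use the asymmetry of $M(\mathbb R)$ --- the existence of a Hermitian measure whose spectrum is non-real --- followed by a harmless real shift to restore positivity. Verifying that such a real-valued $\hat\mu_1$ exists, equivalently that $M(\mathbb R)$ is non-symmetric, is the one genuinely non-elementary input; the rest is bookkeeping.
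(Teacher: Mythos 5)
Your proof is correct, but the decisive step is carried out differently from the paper. Both arguments share the same skeleton: realize the coefficients as multiplication operators with symbol $F(x,y)=g(y-x)$, note that $\Delta_{\cl C_2}$ is multiplication by $F$ on $L_2(\mathbb R^2)$ so that $g\ge 0$ gives $\cl C_2$-positivity, and use the $P$-map/multiplier argument of the preceding proposition to show that invertibility of $\Delta-\lambda_0$ forces $1/(g-\lambda_0)\in B(\mathbb R)$. The difference is in how the bad symbol is produced. You invoke the asymmetry of $M(\mathbb R)$ (a Hermitian measure whose spectrum in $M(\mathbb R)$ is non-real) and then shift to make the symbol non-negative, so your forbidden spectral point is genuinely non-real. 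The paper gets by with the weaker Wiener--Pitt input it already quoted: if every real-valued $g\ge 1$ in $B(\mathbb R)$ were invertible, then for any $f=f_1+if_2$ with $|f|\ge 1$ one would have $|f|^2=f_1^2+f_2^2\ge 1$ invertible and hence $1/f=(f_1-if_2)/|f|^2\in B(\mathbb R)$, contradicting Wiener--Pitt; this yields a real $g\ge 1$ with $0\in\sigma_{B(\mathbb R)}(g)$, and the shift $h=g-1\ge 0$ then has $-1\in\sigma(\Delta_H)$. So the concern you raise in your last paragraph --- that the Wiener--Pitt function ``only yields extra spectrum at $0\in\mathbb R_+$'' --- dissolves once you notice that the statement only requires escaping $[0,+\infty)$, not $\mathbb R$, and that the same subtraction of a constant that restores non-negativity of the symbol moves the extra spectral point to $-1$. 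Your route costs a stronger (though classical and available, e.g.\ via Williamson's asymmetry theorem, see also \cite{rudin}) harmonic-analytic input but delivers slightly more, namely a $\cl C_2$-positive $\Delta$ of this type with non-real spectrum; the paper's route is self-contained modulo the Wiener--Pitt statement already used. The remaining steps of your argument (the identification of (\ref{coef2}) with (\ref{Haag}) for multiplication operators, the reduction of $\lambda_0\in\sigma_{B(\mathbb R)}(g)\setminus\overline{g(\mathbb R)}$ to non-invertibility of $g-\lambda_0$ in $B(\mathbb R)$, and the use of Corollary~\ref{Int-normal}(ii) for $\lambda_0\in\overline{g(\mathbb R)}$) are sound.
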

\begin{proof}
We note first that there exists $g\in B(\mathbb R)$ such that $g(x)\geq 1$, $x\in\mathbb R$, but $1/g\notin B(\mathbb R)$. Indeed, assuming contrary to the statement that any such positive $g\in B(\mathbb R)$ is invertible we obtain that whenever $f=f_1+if_2\in B(\mathbb R)$ such that $|f|\geq 1$ and $f_1$ and $f_2$ are real valued,  $f_1$, $f_2\in B(\mathbb R)$, $|f|^2=f_1^2+f_2^2\in B(\mathbb R)$ and $1/f=(f_1-if_2)/|f|^2\in B(\mathbb R)$, a contradiction.

Let now $h=g-1\in B(\mathbb R)$ and set $G(x,y)=g(x-y)$, $H(x,y)=h(x-y)$, $(x,y)\in\mathbb R$. We have
$\sigma(\Delta_H|_{C_2})=\{h(x): x\in \mathbb R\}\subset\mathbb R_+$. However $\Delta_H+1=\Delta_G$ is not invertible by the arguments from the proof of the previous proposition. Hence $-1\in\sigma(\Delta_H)$.
\end{proof}


\end{document}